\documentclass[reqno,a4paper,12pt]{amsart}

\numberwithin{equation}{section}

\usepackage{amsmath,amsthm,amssymb,amsfonts,mathtools}
\usepackage{url}
\usepackage{hyperref}
\usepackage[sort,nocompress]{cite}
\mathtoolsset{showonlyrefs}

\newtheorem{definition}{Definition}[section]
\newtheorem{theorem}[definition]{Theorem}
\newtheorem{lemma}[definition]{Lemma}
\newtheorem{corollary}[definition]{Corollary}
\newtheorem{proposition}[definition]{Proposition}

\newtheorem*{theorem*}{Theorem}

\usepackage[no-math]{fontspec}
\usepackage[cachedir=minted-cache]{minted}

\newmintinline[lean]{lean4}{bgcolor=white}
\newminted[leancode]{lean4}{escapeinside=!!,
                            breaklines,
                            fontsize=\normalsize}
\newcommand{\N}{\mathbb N}
\newcommand{\Z}{\mathbb Z}
\newcommand{\R}{\mathbb R}
\newcommand{\C}{\mathbb C}
\newcommand{\Q}{\mathbb Q}
\newcommand{\HH}{\mathbb H}
\newcommand{\calT}{\mathcal T}
\newcommand{\calG}{\mathcal G}
\newcommand{\dd}{\delta}
\allowdisplaybreaks

\newcommand{\norm}[1]{\left\lVert #1 \right\rVert}
\newcommand{\QQ}{\mathcal{Q}}
\newcommand{\thetac}[2]{{\vartheta\left[\begin{smallmatrix}
        #1 \\ #2
    \end{smallmatrix}\right]}}
\usepackage{enumerate}
\newcommand{\zak}{\mathcal{Z}}

\title[Derivatives of Theta Functions and a Problem of Lyubarskii and Nes]{Derivatives of Theta Functions and a Problem of Lyubarskii and Nes}

\author[Lukas Liehr]{Lukas Liehr}
\address{Department of Mathematics, Bar-Ilan University, Ramat-Gan 5290002, Israel}
\email{lukas.liehr@biu.ac.il}

\author[Irina Shafkulovska]{Irina Shafkulovska}
\address{Faculty of Mathematics, University of Vienna, Oskar-Morgenstern-Platz 1, A-1090 Vienna, Austria}
\email{irina.shafkulovska@univie.ac.at}

\author[Mitchell A. Taylor]{Mitchell A. Taylor}
\address{Department of Mathematics, ETH Z\"urich, Ramistrasse 101, 8092 Z\"urich, Switzerland}
\email{mitchell.taylor@math.ethz.ch}

\begin{document}

\begin{abstract}
We characterize all lattices $\Lambda \subset \R^2$ of rational density for which the Gabor system generated by the first Hermite function along $\Lambda$ is a frame for $L^2(\R)$. We prove that these are precisely the lattices whose density satisfies $D(\Lambda) = \frac q p $ where $p,q \in \N$ are coprime with $q \geq p+2$, thereby confirming a conjecture of Lyubarskii and Nes. A formalization of our main result in Lean $4$ is also included.
\end{abstract}

\subjclass[2020]{42C15, 33E05, 14K25}
\keywords{Gabor frames, Hermite functions, theta functions, torsion points}

\maketitle

\section{Introduction and result}

Motivated by a question posed by Lyubarskii and Nes \cite{lyubarskii2013gabor}, we consider the problem of determining the lattices $\Lambda$ so that the system of time-frequency shifts of the first Hermite function along $\Lambda$ forms a frame for $L^2(\R)$. Precisely, given $g\in L^2(\R)$ and a lattice $\Lambda = M\Z^2$ where $M\in\mathrm{GL}(2,\R)$, let
$$
\calG(g,\Lambda) :=\{e^{2\pi i \omega t} g(t-x) : (x,\omega)\in\Lambda\}
$$
be the system of time-frequency shifts of $g$ along $\Lambda$, which we refer to as the  \emph{Gabor system} of $g$ along $\Lambda$. We call $\calG(g,\Lambda)$ a \emph{Gabor frame} if there exist constants $0<A\leq B<\infty$ such that
\begin{equation}\label{eq:def:frame}
    A\norm{f}^2
\leq
\sum_{(x,\omega)\in\Lambda} |\langle f,M_{\omega}T_{x} g\rangle|^2 \leq
B\norm{f}^2,\quad f\in L^2(\R).
\end{equation}
The \emph{density} of the lattice $\Lambda$ is given by the reciprocal of the volume of a fundamental domain of $\Lambda$. For a separable lattice $\Lambda = \alpha\Z\times\beta\Z$ one has $D(\Lambda) = (\alpha\beta)^{-1}$. By the density theorem for Gabor frames, $\calG(g,\Lambda)$ can only be a frame if $D(\Lambda)\geq 1$ \cite{Heil2007,GroechenigKoppensteiner2019} and under mild regularity and decay assumptions, this necessary density condition becomes $D(\Lambda) > 1$ \cite{Balian1981,Low1985}.

The Hermite functions $(h_n)_{n\geq 0} \subset L^2(\R)$, defined by
\begin{equation}
    h_n(t) = e^{\pi t^2} \frac{d^n}{dt^n}e^{-2\pi t^2},
\end{equation}
form an orthogonal basis of $L^2(\R)$. In particular, $h_0(t)=e^{-\pi t^2}$ is the standard Gaussian and, up to a normalization which does not affect the frame property, the first Hermite function is given by $h_1(t)=t e^{-\pi t^2}$.

The lattices $\Lambda$ for which the Gaussian Gabor system $\calG(h_0,\Lambda)$ forms a frame were characterized by Lyubarskii \cite{Lyubarskii1992} and Seip-Wallst\'en \cite{Seip1992,SeipWallsten1992}: $\calG(h_0,\Lambda)$ is a frame if and only if $D(\Lambda)>1$. For Hermite functions of higher order, the corresponding characterization remains an open problem. In the present paper, we are interested in the Gabor frame properties of $h_1$.

Lyubarskii and Nes \cite{lyubarskii2013gabor} showed that for every odd function $g$ belonging to the Feichtinger algebra and every separable lattice $\Lambda = \alpha\Z\times\beta\Z$, the system $\calG(g,\Lambda)$ is \emph{not} a Gabor frame if the density of the lattice $\Lambda$ satisfies
\begin{equation}\label{eq:obst}
    D(\Lambda) = \frac{m+1}{m} \quad \text{for some } m\in \N.
\end{equation}
Since $h_1$ is odd, this provides an arithmetic obstruction to the frame property of the first Hermite function: within the range $D(\Lambda)>1$ permitted by the density theorem, the rational densities $\frac{m+1}{m}$, $m\in \N$, are excluded. In the same paper, Lyubarskii and Nes conjectured that for $h_1$ this is the only obstruction at rational densities in the case of separable lattices (for Hermite functions $h_n$ with $n>1$ it is known that \eqref{eq:obst} is \emph{not} the only obstruction, see the discussion in Section \ref{sec:prev_rec_work}). We confirm the conjecture for arbitrary lattices.

\begin{theorem}\label{thm:main-intro}
Let $\Lambda \subset \R^2$ be a lattice with density $D(\Lambda) \in \Q$. Then the following two statements are equivalent:
\begin{enumerate}
    \item $\calG(h_1,\Lambda)$ is a frame for $L^2(\R)$;
    \item $D(\Lambda) = \frac qp$, where $p,q \in \N$ are coprime with $q\ge p+2$.
\end{enumerate}
\end{theorem}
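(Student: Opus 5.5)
Our approach rests on three reductions: metaplectic invariance, the Zak transform at rational density, and the theta-function structure of the Hermite function.

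\textbf{Step 1 (reduction to separable lattices).} The first Hermite function is an eigenfunction of the metaplectic representation of rotations. For a general symplectic matrix, however, $\mu(S)h_1$ is a generalized ``Hermite-type'' function $t\mapsto (at+b)e^{-\pi \tau t^2}$ with $\tau\in\HH$. Writing $\Lambda = c\,S\Z^2$ with $S\in \mathrm{SL}(2,\R)$ and $c^{-2}=D(\Lambda)$, the frame property of $\calG(h_1,\Lambda)$ is therefore equivalent to that of $\calG(\mu(S)^{-1}h_1, c\Z^2)$. After an Iwasawa decomposition, this amounts to studying the functions $g_\tau(t)=t\,e^{\pi i\tau t^2}$, $\tau\in\HH$, on the separable lattice $\frac1{\sqrt{D}}\Z\times\frac1{\sqrt D}\Z$. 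Dilating once more, we may take $\Lambda=\frac1q\Z\times p\Z$ or a similar normalization with density $q/p$. Since the obstruction of Lyubarskii--Nes only uses oddness, which $g_\tau$ retains, direction (1)$\Rightarrow$(2) follows once we observe that $q\ge p+2$ fails for coprime $p<q$ exactly when $q=p+1$, i.e.\ $D=\frac{m+1}{m}$. The density theorem excludes $q\le p$. We will nonetheless re-derive this direction from the criterion below.

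\textbf{Step 2 (Zibulski--Zeevi criterion).} For rational density $q/p$ on $\alpha\Z\times\beta\Z$, $\calG(g,\Lambda)$ is a frame iff the $p\times q$ Zibulski--Zeevi matrix
$$\Phi(x,\omega)=\Big(\zak g\big(x+\tfrac{\ell}{q}\cdot\ldots,\ \omega+\tfrac{k}{p}\big)\Big)_{k,\ell}$$
built from the Zak transform has full rank $p$ for every $(x,\omega)$. Since $\zak g_\tau$ is continuous, the quasiperiodicity reduces this to compactness. The Zak transform of the Gaussian $e^{\pi i\tau t^2}$ is a Jacobi theta function $\vartheta(z,\tau)$ in $z=\omega+\tau x$ up to nonvanishing factors. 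That of $t e^{\pi i\tau t^2}$ is, up to the same factor and additive multiples of it, a derivative $\partial_z\vartheta$. Concretely, $\zak g_\tau \propto e^{\ldots}\big(\vartheta'(z)+c(x)\vartheta(z)\big)$ with an explicit linear term. Full rank then becomes a statement that certain $p\times q$ matrices whose entries are the values of $\vartheta'+c\vartheta$ at the translates $z+\frac{k}{p}+\frac{\ell\tau}{q}$ (a coset of $\frac1p\Z+\frac{\tau}{q}\Z$, i.e.\ torsion points) are never rank deficient.

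\textbf{Step 3 (key analysis, main obstacle).} Rank deficiency means there is a nonzero vector $c\in\C^p$ with $\sum_k c_k F(z+k/p+\ell\tau/q)=0$ for all $\ell$, where $F = \vartheta'+c\vartheta$. The plan is to rewrite $\sum_k c_k\vartheta(\,\cdot+k/p)$ in the basis of theta functions with characteristics of level $p$, so that each row becomes a combination $\Theta'(w)+\lambda\Theta(w)$ of a fixed theta function $\Theta$ of higher level and its derivative, evaluated at the $q$ points $w_\ell=w+\ell\tau/q$. A nonzero theta function of level $p$ has exactly $p$ zeros in a fundamental domain. The combination $\Theta'+\lambda\Theta$ (equivalently the derivative of $e^{\lambda w}\Theta$) is controlled via the argument principle: it vanishes on the $q$ points only if $q\le p+1$. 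The count is $p$ zeros, plus one extra from the derivative, since $\Theta'/\Theta+\lambda$ has $p$ poles and hence $p$ zeros, and the quasi-periodicity forces one extra coincidence. Conversely, for $q=p+1$ one can exhibit a vanishing configuration, which recovers the Lyubarskii--Nes obstruction and covers non-separable lattices. I expect the main difficulty here to be making this zero count sharp. Points where $\Theta$ itself vanishes at some $w_\ell$ must be handled, as must the possibility that the $q$ equally spaced points interact with the $p$ zeros of $\Theta'/\Theta+\lambda$ through Abel's theorem (sum of zeros congruent to a fixed value). I would first attempt it via the elliptic function $\Theta'/\Theta+\lambda$ on the torus $\C/(\frac1p\Z+\tau\Z)$. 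A coincidence of $q\ge p+2$ zeros with an elliptic function having only $p$ poles is impossible unless the function is constant, and constancy is ruled out by $\Theta\not\equiv0$ having zeros.
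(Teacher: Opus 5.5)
Your Steps 1 and 2 are essentially the paper's reduction: metaplectic covariance to a rectangular lattice of density $q/p$, the Zibulski--Zeevi criterion with continuity and quasi-periodicity, and the Zak transform as a derivative of a theta function. Deriving (1)$\Rightarrow$(2) from the Lyubarskii--Nes obstruction for odd windows is also fine. The gap is Step 3, which carries the whole content of the theorem.

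The function $\Theta'/\Theta+\lambda$ is not elliptic. For a level-$p$ theta function with $\Theta(z+\tau)=e^{-\pi i p\tau-2\pi i pz}\Theta(z)$ one has $(\Theta'/\Theta)(z+\tau)=(\Theta'/\Theta)(z)-2\pi i p$. So it is a sum of Weierstrass-$\zeta$-type terms plus a linear function. Consequently:
\begin{itemize}
\item the boundary contributions in the argument principle do not cancel;
\item $\Theta'+\lambda\Theta$ is not a theta function, and it has no fixed number of zeros per period parallelogram;
\item $\C/(\frac1p\Z+\tau\Z)$ is not the right torus, since a general element of the level-$p$ space is not quasi-periodic under $z\mapsto z+\frac1p$.
\end{itemize}
Worse, a count of this kind cannot separate $q=p+1$ from $q=p+2$. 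If $\Theta'/\Theta+\lambda$ really were elliptic with $p$ poles, as you assume, then $\Theta'+\lambda\Theta$ would have only $p$ zeros on the torus, double zeros of $\Theta$ included. Vanishing at $p+1$ distinct torsion points would then already be impossible, and you would have ``proved'' the frame property at $D=\frac{p+1}{p}$. That contradicts the Lyubarskii--Nes obstruction you invoke. The sentence ``quasi-periodicity forces one extra coincidence'' is exactly where an argument is needed, and none is supplied.

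The missing idea is a way to turn the $p$-dimensional rank condition into a scalar condition on which the torsion structure of the evaluation points can act. The paper does this in four steps:
\begin{itemize}
\item It takes the $q$ cyclically consecutive $p\times p$ minors $M_j$ of the $q\times p$ matrix. It evaluates the undifferentiated determinant by the Rosengren--Schlosser formula $\det[\phi_s(z_l)]=C\,\Theta(\sum_l z_l)\prod_{l<l'}\thetac{1/2}{1/2}(z_{l'}-z_l,\tau)$, where $\Theta$ is a single level-one theta function.
\item Applying $\delta_{z_0}\cdots\delta_{z_{p-1}}$ and restricting to $z_l=w+l/q$ gives $M(w)=Q(\delta_z)\Theta(pw+\frac{p(p-1)}{2q})$. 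Here $\deg Q=p$, with leading coefficient $C\prod_{l<l'}\thetac{1/2}{1/2}(\frac{l'-l}{q},\tau)\neq0$.
\item Because $\gcd(p,q)=1$, the condition $M_j=M(w+j/q)=0$ for all $j$ says that $Q(\delta_z)\Theta$ vanishes at all $q$ translates $w'+j/q$.
\item These $q$ translates of $\Theta$ form a basis of a level-$q$ space, which separates $(q-2)$-jets at every point (Bauer--Szemberg; on an elliptic curve this is Riemann--Roch). So $p\le q-2$ forces $Q=0$, a contradiction. This is precisely where $q\ge p+2$ enters.
\end{itemize}

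A secondary problem: with your time-side formula, the coefficient $c(x)$ is not holomorphic in $z$ and changes from row to row of the Zibulski--Zeevi matrix. So there is no single $\lambda$. The paper avoids this by using $\zak f(x,\omega)=e^{2\pi i x\omega}\zak\hat f(\omega,-x)$. This makes the entries, up to a constant, $\delta_z\thetac{(p\omega+s)/p}{0}(x+\tfrac{pr}{q},\tau)$. That is one fixed operator applied to the basis $\thetac{(p\omega+s)/p}{0}(p\,\cdot,\tau)$ of $\calT_{p,p\omega,0}(\tau/p)$, evaluated at the real $q$-torsion translates $\frac xp+\frac rq$.
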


The substance of Theorem \ref{thm:main-intro} is the implication $(2)\Rightarrow(1)$. The converse implication follows from the original work of Lyubarskii and Nes and its extension to general lattices, see \cite{Faulhuber2020}.
We also emphasize that the relevant range is $1 < D(\Lambda) \leq 2$: by a result of Gr\"ochenig and Lyubarskii \cite{GroechenigLyubarskii2007,GroechenigLyubarskii2009}, the system $\calG(h_1,\Lambda)$ is a frame for every lattice satisfying $D(\Lambda) > 2$, while all exceptional densities $\frac{m+1}{m}$ lie in the interval $(1,2]$. Theorem \ref{thm:main-intro} settles the frame property for all rational densities in this remaining strip.

\subsection{Relation to previous and recent work}\label{sec:prev_rec_work}
Theorem \ref{thm:main-intro} should be seen in the context of the frame set problem for Hermite functions, that is, the problem of determining all lattices $\Lambda$ for which $\calG(h_n,\Lambda)$ is a frame. It was conjectured in the literature \cite[Conj.~4.2]{Groechenig2014} that if $n$ is even, then the system $\calG(h_n,\alpha\Z \times \beta \Z)$ is a frame if and only if $\alpha\beta < 1$. If $n$ is odd, the conjecture proposed that $\calG(h_n,\alpha\Z \times \beta \Z)$ is a frame if and only if $\alpha\beta < 1$ and $\alpha\beta \neq \frac{m}{m+1}$ for all $m \in \N$.

The conjecture described above was disproved for the orders $n \equiv 2,3 \pmod 4$ by Lemvig \cite{Lemvig2017}, and subsequently for all orders $n \geq 2$ by Horst, Lemvig and Videb\ae k \cite{HorstEtAl2025}. Consequently, $h_1$ is the only Hermite function of positive order for which the conjectured description of the frame set may still be correct, and Theorem \ref{thm:main-intro} confirms it for all lattices of rational density. 

Recently, in \cite{AKIZ}, Kulikov and Zlotnikov showed that for every
$
\delta\in(0,1)\setminus\left\{\frac{m}{m+1}\right\}_{m\in\mathbb N},
$
including irrational values of $\delta$, there exists $\gamma=\gamma(\delta)>0$ such that the two symmetric infinite pieces of the hyperbola
$$
\left \{(\alpha,\beta):\alpha\beta=\delta,\ \alpha>\gamma \right \}
\quad\text{and}\quad
\left \{(\alpha,\beta):\alpha\beta=\delta,\ \beta>\gamma \right \}
$$
belong to the frame set of $h_1$. However, unlike our result for rational $\delta$, which holds for all $\alpha,\beta>0$ satisfying $\alpha\beta=\delta$, their argument only applies to points for which $\alpha$ or $\beta$ is sufficiently large.

Finally, we mention the recent works \cite{FaulhuberEtAl2025,FaulhuberShafkulovskaZlotnikov_part2,Faulhuber2026wirtinger}, which further investigate the frame set problem for Hermite functions. We also refer to \cite{UlanovskiiZlotnikov2026} for recent parity-based obstructions in a more general setting. A numerical investigation of Gabor frames generated by Hermite functions is presented in \cite{GhoshSelvan2025}.

In Section \ref{sec:conclusion} we show that our methods further yield frame results for Gabor systems generated by linear combinations of Hermite functions (Corollary \ref{cor:higher_order}). This should be compared with the recent results of Ulanovskii and Zlotnikov \cite{UlanovskiiZlotnikov2026}.

\subsection*{Usage of Large Language Models}
Large language models were used as research tools throughout this project and played an important role in the development of this work. After reducing the original Gabor frame problem, via the Zibulski-Zeevi criterion, to a question concerning derivatives of theta functions, we used GPT-5.4 to explore connections between the resulting problem and the existing literature on theta functions and abelian varieties. In particular, this exploration led us to the elliptic determinant identities of Rosengren and Schlosser \cite{RosengrenSchlosser2006} and to the jet-independence results of Bauer and Szemberg \cite{BauerSzemberg1997}, which became key ingredients in the proof of our main result. GPT-5.4 was also used to help navigate the different terminology appearing in the literature on theta functions. While our problem is formulated in terms of classical theta functions and their derivatives, parts of the literature needed for the proof are formulated in the language of line bundles, their sections, and jet separation on abelian varieties. In these cases, GPT-5.4 was used to clarify the correspondence between these formulations. The Lean formalization of the main result was done with the help of Codex-5.4 and Opus 4.6.

\section{Preliminaries}

\subsection{Gabor frames}

We begin with some standard facts from Gabor theory; see \cite{Groechenig2001,Folland1989} for a detailed exposition and \cite{GroechenigKoppensteiner2019} for an updated overview.

It is well-known that $\calG(g,\Lambda)$ is a frame with frame bounds $0<A\leq B<\infty$ if and only if its \emph{frame operator}
\begin{equation}
    S_{g,\Lambda} f=\sum_{(x,\omega)\in\Lambda} 
    \langle f,M_{\omega}T_{x} g\rangle 
    M_{\omega}T_{x} g, \quad f\in L^2(\R),
\end{equation}
is a positive definite operator with spectrum in $[A,B]$. If this is the case, then any $f\in L^2(\R)$ can be recovered via
\begin{equation}
    f = \sum_{(x,\omega)\in\Lambda} 
    \langle f,M_{\omega}T_{x} g\rangle 
    S_{g,\Lambda}^{-1} M_{\omega}T_{x} g,
\end{equation}
where the series on the right converges unconditionally in $L^2(\R)$.
It is known that $D(\Lambda)\geq 1$ is a necessary condition so that $\calG(g,\Lambda)$ is a frame \cite{Heil2007}.
Assuming a rational relation $D(\Lambda)\in\Q$, this additional structure enables us to study the frame property of $\calG(g,\Lambda)$ through spectral properties of the Zak transform of $g$. For definitions and facts about the Zak transform, we refer to Section \ref{sec:zak}.

\subsection{Symplectic matrices and metaplectic operators}

Every lattice $A\Z^2$ with $A\in\mathrm{GL}(2,\R)$ can also be written as $A'\Z^2$ with $\det A'>0$. We may therefore always assume that $\det A>0$. It follows from the Iwasawa decomposition \cite[Chap.~VI.~4.]{Knapp1996} of the special linear group $\mathrm{SL}(2,\R)$ that any $A\in\mathrm{GL}(2,\R)$ with $\det A>0$ can be decomposed as
\begin{equation}\label{eq:A_decomposed}
A  = (\det A)^{1/2} \cdot \begin{psmallmatrix}
      1 & 0 \\ \eta & 1
  \end{psmallmatrix} \cdot \begin{psmallmatrix}
      \nu & 0 \\ 0 & \nu^{-1}
  \end{psmallmatrix}  \cdot \begin{psmallmatrix}
      \mathrm{Re}(\sigma) & \mathrm{Im}(\sigma) \\ -\mathrm{Im}(\sigma) & \mathrm{Re}(\sigma) 
       \end{psmallmatrix} 
       \eqqcolon (\det A)^{1/2} \cdot V_\eta \cdot D_\nu \cdot R_{\sigma},
\end{equation}
where $\eta\in\R, \, \nu>0$ and $|\sigma|=1$.
The double cover of $\mathrm{SL}(2,\R)$ is the metaplectic group $\mathrm{Mp}(2,\R)$, and it is usually realized as a group of unitary operators acting on $L^2(\R)$. Since $V_\eta, D_\nu, R_\sigma$ generate the group, all metaplectic operators are generated from the following three types:
\begin{equation}
    \widehat{V}_\eta f(t) = e^{\pi i \eta t^2}f(t),\quad \widehat{D}_\nu f(t) = \nu^{-1/2} f(\tfrac{t}{\nu}),\quad \widehat{R}_\sigma h_n(t) = (\overline{\sigma})^n h_n, \quad n\in\N_0.
\end{equation}
As $(h_n)_{n\in\N_0}$ is an orthogonal basis of $L^2(\R)$, this fully describes the action of $\widehat{R}_{\sigma}$. 
Note that $\widehat{R}_{1}=\mathrm{id}$ and $\widehat{R}_{i}=\mathcal{F}$ is the Fourier transform.
The projection from $\mathrm{Mp}(2,\R)$ to $\mathrm{SL}(2,\R)$
acts as
\begin{equation}\label{eq:pi_Mp}
\widehat{V}_\eta\widehat{D}_\nu\widehat{R}_\sigma \mapsto V_\eta D_\nu R_\sigma.
\end{equation}
Matrices $S\in \mathrm{SL}(2,\R)$ and metaplectic operators $\widehat{S}\in\mathrm{Mp}(2,\R)$ interact particularly well with time and frequency shifts. 
In particular, they intertwine Gabor frames in the following sense \cite[p.~200]{Groechenig2001}:
\begin{equation}\label{eq:Gabor_Mp_invariance}
    \calG(g,\Lambda) \text{ is a Gabor frame}\quad \iff \quad \calG(\widehat{S}g,S\Lambda) \text{ is a Gabor frame.}
\end{equation}
We refer to \cite{Groechenig2001,Folland1989} for a detailed exposition and the connection between Gabor frames and representation theory on the Heisenberg group. 

\subsection{The Zak transform}\label{sec:zak}

The Zak transform of a function $f$ is formally defined by 
\begin{equation}
    \zak f(x,\omega)
\coloneqq
\sum_{k\in\Z}f(x- k)e^{2\pi i k\omega}.
\end{equation}
The formula converges pointwise everywhere to a continuous function if $f$ belongs to the Wiener amalgam space $W(C,\ell^1)$, which is the collection of all continuous $f$ such that
$$
\|{f}\|_{W(L^\infty, \ell^1)} =
\sum\limits_{k\in\Z} \sup_{x\in[0,1]} |f(x-k)| <\infty.
$$
With the usual density arguments, $\zak$ extends to a 
unitary mapping from $L^2(\R)$ to $L^2([0,1]^2)$.

The Zak transform $\mathcal{Z} f$ is a quasi-periodic function in the sense that 
\begin{equation}\label{eq:Z_quasip}
  \zak f(x+1,\omega)=e^{2\pi i\omega}\zak f(x,\omega),
\quad
\zak f(x,\omega+1)=\zak f(x,\omega).
\end{equation}
Assuming that the Fourier transform $\hat{f}(\xi) = \int_\R f(t) e^{-2\pi i t \xi} \, dt$ is well-defined,
\begin{equation}\label{eq:Zak_ft}
  \zak f(x,\omega) = e^{2\pi i x\omega} \zak \hat{f}(\omega, -x).  
\end{equation}
The key observation connecting the Zak transform to Gabor frames is the fact that the Zak transform is an intertwining operator between the frame operator and a multiplication operator \cite[Thm.~8.3.1.]{Groechenig2001}. Namely, we have
\begin{equation}\label{eq:frame_vs_Zak}
    \zak (S_{g,\Z^2} f) = |\zak g|^2\cdot \zak f.
\end{equation}
Invertibility of multiplication operators is significantly simpler to check than the invertibility of a general frame operator.

Zibulski and Zeevi extended \eqref{eq:frame_vs_Zak} from $\Z^2$ to separable lattices $\alpha\Z\times\beta\Z$ with $\alpha\beta\in\Q$ by developing a multi-window approach and a vector-valued Zak transform \cite{ZibulskiZeevi1993, ZibulskiZeevi1997}. There are several equivalent formulations. In the following, we refer to the version in \cite[Thm.~7.2.]{GroechenigKoppensteiner2019}.

\begin{theorem}\label{thm:ZZ_crit_cited}
    Let $g\in L^2(\R)$ and let $p,q\in \N$ be coprime with $p<q$. 
    Then $\calG(g,\tfrac{p}{q}\Z\times \Z)$ is a Gabor frame if and only if the singular values of the matrix $\QQ(x,\omega)\in \C^{q\times p}$ with entries 
    \begin{equation}
        \QQ(x,\omega)_{rs} = \zak g( x+\tfrac{pr}{q},  \omega +\tfrac{s}{p}) ,\quad\, 0\leq r\leq q-1, 0\leq s\leq p-1,
    \end{equation}
    are contained in a compact interval in $(0,\infty)$ for almost all $x,\omega\in \R$.
\end{theorem}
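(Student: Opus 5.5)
We prove the criterion by rewriting $\calG(g,\tfrac pq\Z\times\Z)$ as a finite multi-window system along the coarser lattice $p\Z\times\Z$. We then pass to the Zak side via \eqref{eq:frame_vs_Zak}, where the frame operator becomes multiplication by a $p\times p$ matrix function.

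\emph{Step 1 (multi-window decomposition).} Every $k\in\Z$ can be written uniquely as $k=qm+r$ with $m\in\Z$ and $0\le r\le q-1$. Then $k\tfrac pq = pm+\tfrac{pr}{q}$, and up to unimodular phase factors (which do not affect frame sums) we get
$$\calG(g,\tfrac pq\Z\times\Z)=\bigcup_{r=0}^{q-1}\{M_nT_{pm}g_r: m,n\in\Z\},\qquad g_r:=T_{pr/q}\,g .$$
So $\calG(g,\tfrac pq\Z\times\Z)$ is a frame if and only if
$$\sum_{r}\sum_{m,n}|\langle f,M_nT_{pm}g_r\rangle|^2 \asymp \|f\|^2 .$$

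\emph{Step 2 (Zak side).} By unitarity of $\zak$ and the relations $\zak(M_nT_{pm}h)(x,\omega)=e^{2\pi i nx}e^{-2\pi i pm\omega}\zak h(x,\omega)$, which follow from the definition and \eqref{eq:Z_quasip}, we have
$$\langle f,M_nT_{pm}g_r\rangle=\int_{[0,1]^2}\zak f\,\overline{\zak g_r}\,e^{-2\pi i(nx-pm\omega)}\,d(x,\omega).$$
We split $[0,1]$ in $\omega$ into the $p$ intervals $[\tfrac sp,\tfrac{s+1}p)$. Because $e^{2\pi i pm\omega}$ is $\tfrac1p$-periodic, the integral becomes an integral over $[0,1]\times[0,\tfrac1p]$ of
$$\Phi_r(x,\omega)=\sum_{s=0}^{p-1}\zak f(x,\omega+\tfrac sp)\,\overline{\zak g_r(x,\omega+\tfrac sp)}$$
against the exponentials $e^{-2\pi i(nx-pm\omega)}$. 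These exponentials form an orthogonal basis of $L^2([0,1]\times[0,\tfrac1p])$, each of norm squared $\tfrac1p$. Parseval therefore gives
$$\sum_{m,n}|\langle f,M_nT_{pm}g_r\rangle|^2=\tfrac1p\int_{[0,1]\times[0,1/p]}|\Phi_r|^2 .$$
Writing $F(x,\omega)=(\zak f(x,\omega+\tfrac sp))_{s}\in\C^p$, summing over $r$ yields
$$\sum_{r}\sum_{m,n}|\langle f,M_nT_{pm}g_r\rangle|^2=\tfrac1p\int\|\widetilde\QQ(x,\omega)F(x,\omega)\|^2,$$
where $\widetilde\QQ_{rs}=\zak g(x-\tfrac{pr}{q},\omega+\tfrac sp)$. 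Moreover,
$$\|f\|^2=\int_{[0,1]^2}|\zak f|^2=\int_{[0,1]\times[0,1/p]}\|F\|^2 .$$

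\emph{Step 3 (matching $\QQ$ and concluding).} Replacing $r$ by $q-r$ and then reducing $x-\tfrac{p(q-r)}{q}=x+\tfrac{pr}{q}-p$ via \eqref{eq:Z_quasip} turns $\widetilde\QQ$ into $\QQ$, up to multiplying each row by a unimodular factor. Row permutations and unimodular row scalings do not change $\QQ^*\QQ$, hence the singular values are unchanged.

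Unitarity of $\zak$ implies that $f\mapsto F$ is a unitary map onto all of $L^2([0,1]\times[0,\tfrac1p];\C^p)$. The frame inequalities are therefore equivalent to
$$A\,I\le \tfrac1p\QQ^*\QQ\le B\,I\quad\text{a.e.},$$
by the standard fact that a matrix multiplication operator is bounded and bounded below exactly when its essential pointwise bounds hold. This is the same as the singular values of $\QQ$ lying in a compact subinterval of $(0,\infty)$ almost everywhere; extending from the fundamental domain to all of $\R^2$ is immediate by quasi-periodicity.

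The main obstacle is the bookkeeping in Step 2: the folding in $\omega$ and the verification that the $p$ shifted copies of $\zak f$ range independently over all of $L^2$ (surjectivity). The latter is what makes the pointwise lower bound necessary, not merely sufficient. The necessity direction for the lower bound requires testing with $F$ concentrated on a set where the smallest singular value is small, which this surjectivity permits.
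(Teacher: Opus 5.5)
The paper gives no proof of this criterion; it imports it from Gr\"ochenig--Koppensteiner (Thm.~7.2), going back to Zibulski--Zeevi's multi-window, vector-valued Zak transform method. Your argument is a correct, self-contained version of exactly that method. The only slip is that $\Phi_r=\sum_s\overline{\widetilde\QQ_{rs}}\,F_s$, so the matrix acting on $F$ is the entrywise conjugate $\overline{\widetilde\QQ}$ rather than $\widetilde\QQ$; this is harmless, since $\overline{M}$ and $M$ have the same singular values.
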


We conclude this section by recalling that the Zak transform of $\widehat{V}_\eta \widehat{D}_\nu\widehat{R}_{\sigma} h_1$ is given by
\begin{equation}\label{eq:Zak_general_h1}
\zak (\widehat{V}_\eta \widehat{D}_\nu\widehat{R}_{\sigma} h_1) (x,\omega) = -\tfrac{\overline{\sigma}}{\nu^{3/2}}
        \sum_{k\in\Z}  (k-x) e^{\pi i (\eta+i/\nu^2)(k-x)^2} e^{2\pi i k\omega}.
\end{equation}

\subsection{Theta functions}\label{sec:theta}

The theory of theta functions starts with the series
\begin{equation}\label{eq:def:theta}
    \vartheta(z,\tau) = \sum\limits_{k\in\Z}e^{\pi i \tau k^2}e^{2\pi i z k},\quad z\in \C,\, \tau\in \C \text{ with } \Im(\tau)>0.
\end{equation}
The above function is an entire quasi-periodic function in the first variable $z$ and has natural connections to elliptic functions and their applications, see \cite{MumfordTataI,FarkasKra2001}. The behavior in the second variable is more subtle, and it is intrinsically tied to the theory of modular forms on $\HH = \{\tau \in \C: \Im(\tau)>0\}$ \cite{MumfordTataI}.
We refer to \cite{MumfordTataI, Krazer1903,FarkasKra2001} for a comprehensive introduction to the theory of theta functions; we mostly adopt the notation used in \cite{MumfordTataI}.

The function $\vartheta$ is commonly referred to as Jacobi's third theta function. It is a quasi-periodic function satisfying 
\begin{equation}
\vartheta(z+1,\tau)=\vartheta(z,\tau),
\quad
\vartheta(z+\tau,\tau)=e^{-\pi i\tau-2\pi i z}\vartheta(z,\tau).
\end{equation}
Up to scaling, $\vartheta(\cdot,\tau)$ is the unique entire function satisfying this quasi-periodicity condition \cite[Chap.~2,~Cor.~1.9.]{FarkasKra2001}. 
More generally, we consider the theta functions with characteristic $\begin{psmallmatrix}
    a\\b
\end{psmallmatrix}\in\C^2$, defined as
\begin{equation}\label{eq:def:thetac}
    \vartheta\left[\begin{smallmatrix}
        a \\ b
    \end{smallmatrix}\right](z,\tau) =\sum\limits_{k\in\Z}e^{\pi i \tau (k+a)^2}e^{2\pi i (z+b) (k+a)},\quad z\in \C,\, \tau\in \HH.
\end{equation}
The most important examples come from characteristics in $\Q^2$, but we will consider complex-valued characteristics here. 
Note that, up to Gaussian factors, $\thetac{a}{b}$ is just a shift of $\vartheta$ \cite[p.~73]{FarkasKra2001},
\begin{equation}\label{eq:thetac_vs_vartheta}
    \thetac{a}{b}(z,\tau) = e^{\pi i a^2\tau + 2\pi i (az +ab)}\vartheta(z+b+a\tau, \tau),
\end{equation}
so $\thetac{a}{b}$ can be seen merely as a translation of $\vartheta$.
Just as $\vartheta$, the function $\thetac{a}{b}$ is an entire function uniquely characterized by its quasi-periodicity
\begin{equation}\label{eq:thetac_quasip}
\thetac{a}{b}(z+1,\tau)=e^{2\pi i a}\thetac{a}{b}(z,\tau),
\ \
\thetac{a}{b}(z+\tau,\tau)=e^{-\pi i(2b+2z +\tau)}\thetac{a}{b}(z,\tau).
\end{equation}
The  function $\thetac{a}{b}$ only has simple zeros, given by 
\begin{equation}\label{eq:zeros}
    \thetac{a}{b}(z,\tau)=0 \quad\Leftrightarrow\quad z \in \tfrac{1-2b}{2} +\tfrac{1-2a}{2}\tau +\Z+\tau\Z.
\end{equation}
In other words, $\thetac{a}{b}$ has a unique simple zero in the 
fundamental period parallelogram $[0,1)+\tau[0,1)$. 

We are interested in a slightly different version of quasi-periodic entire functions.
Let $\mathcal O(\C)$ denote the space of entire functions on $\C$ and let $p\in\N$. The space of $p$-th order $\vartheta$-functions with characteristic $\begin{psmallmatrix}
    a\\b
\end{psmallmatrix}\in\C^2$
is given by 
\begin{equation}\label{eq:def:Tp}
    \calT_{p,a,b}(\tau):=
\left\{f\in\mathcal O(\C):
 f(z+1)=e^{2\pi i a}f(z),\ 
 f(z+\tau)=e^{-\pi i p\tau-2\pi i (pz+b)}f(z)
\right\}.
\end{equation}
This space appears, for instance, in \cite[Chap.~2,~Def.~7.4.]{FarkasKra2001}. If $a=b=0$, we simply write $\calT_p(\tau) := \calT_{p,0,0}$.
We make use of the following proposition \cite[Chap.~2, p.~132--134]{FarkasKra2001}.

\begin{proposition}\label{prop:elem_properties_calTp}
Let $\tau\in\HH$,  $p\in\N$, and  $a,b\in\C$. The following statements hold:
\begin{enumerate}[(i)]
    \item 
    The space $\calT_{p,a,b}(\tau)$ is a $p$-dimensional $\C$-vector space.
    \item The entire functions
    \begin{equation}
    \begin{split}
    z\mapsto
        \thetac{(a+s)/p}{b}(pz,p\tau),\quad 0\leq s\leq p-1,
    \end{split}
    \end{equation}
    form a canonical basis of $\calT_{p,a,b}(\tau)$. 
    \item Any nonzero $f\in\calT_{p,a,b}(\tau)$ has exactly $p$ zeros
$z_1,\dots,z_p$, counted with multiplicity, in the half-open fundamental
period parallelogram
\[
\mathcal P_\tau:=[0,1)+\tau[0,1)
=\{x+y\tau:0\le x<1,\ 0\le y<1\},
\]
and their sum satisfies
\begin{equation}\label{eq:p_sum_zeros}
    \sum_{l=1}^p z_l
    \in \tfrac{p-2b}{2}+\tfrac{p-2a}{2}\tau+\Z+\tau\Z.
\end{equation}
Conversely, any multiset $z_1,\dots,z_p\in\mathcal P_\tau$ satisfying
\eqref{eq:p_sum_zeros} is the zero multiset of a nonzero function in
$\calT_{p,a,b}(\tau)$.
\end{enumerate}  
\end{proposition}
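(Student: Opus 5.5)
This is a standard result (Farkas--Kra); the plan is to check membership and independence of the proposed basis functions, count dimension by Fourier expansion, and get the zero statements from the argument principle and a logarithmic-derivative integral.

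\textbf{Basis and dimension.} Fix $s\in\{0,\dots,p-1\}$ and set $f_s(z):=\thetac{(a+s)/p}{b}(pz,p\tau)$.

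By \eqref{eq:thetac_quasip} applied with $\tau$ replaced by $p\tau$, the shift $z\mapsto z+1$ moves $pz$ by $p$ integer steps. This multiplies $f_s$ by $e^{2\pi i (a+s)}=e^{2\pi i a}$.

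The shift $z\mapsto z+\tau$ moves $pz$ by one period $p\tau$. This produces the factor $e^{-\pi i(2b+2pz+p\tau)}$, which is exactly the automorphy factor in \eqref{eq:def:Tp}. Hence $f_s\in\calT_{p,a,b}(\tau)$.

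Next, take an arbitrary $f\in\calT_{p,a,b}(\tau)$. The first condition gives $e^{-2\pi i a z}f(z)$ period $1$, so
\[
f(z)=\sum_{n\in\Z}c_n e^{2\pi i (n+a)z}.
\]
Inserting this into the second condition and comparing coefficients gives the recursion
\[
c_{n+p}=c_n\, e^{\pi i p\tau+2\pi i b+2\pi i (n+a)\tau}.
\]
Therefore $f$ is determined by $c_0,\dots,c_{p-1}$, so $\dim\calT_{p,a,b}(\tau)\le p$.

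Expanding $f_s$ directly, it involves only the frequencies $n\equiv s \pmod p$. The $f_s$ therefore have disjoint Fourier supports and are linearly independent. This gives both (i) and (ii).

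\textbf{Zero count.} Let $f\neq0$ be in the space. Translate $\mathcal P_\tau$ slightly so that no zeros lie on its boundary. The number of zeros is $\frac1{2\pi i}\oint f'/f$.

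Since $f'(z+1)/f(z+1)=f'(z)/f(z)$, the contributions of the two sides paired by $z\mapsto z+1$ cancel. Since
\[
\frac{f'}{f}(z+\tau)=\frac{f'}{f}(z)-2\pi i p,
\]
the pair of sides related by $z\mapsto z+\tau$ contributes $2\pi i p$. Hence there are exactly $p$ zeros, counted with multiplicity.

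\textbf{Sum of zeros.} The sum of the zeros is $\frac1{2\pi i}\oint z f'(z)/f(z)\,dz$. Pair sides again.

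The sides related by $z\mapsto z+1$ contribute $-\frac{1}{2\pi i}\int_{z_0}^{z_0+\tau} f'/f$. This equals the change of $\log f$ along that edge. Using $f(z+\tau)=e^{-\pi i p\tau-2\pi i(pz_0+b)}f(z)$, it is $\frac{1}{2\pi i}(\pi i p\tau+2\pi i(pz_0+b))$ up to an integer.

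The sides related by $z\mapsto z+\tau$ contribute two pieces. The first is $\tau\cdot\frac1{2\pi i}\int f'/f$ over the bottom edge, which equals $\tau(a+m)$ for some $m\in\Z$ because $f(z+1)=e^{2\pi i a}f(z)$. The second is $-\frac1{2\pi i}\int_{z_0}^{z_0+1}(z+\tau)(-2\pi i p)\,dz$, which is explicit.

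Collecting these terms, the $z_0$-dependence cancels. The result is $\frac{p-2b}{2}+\frac{p-2a}{2}\tau$ modulo $\Z+\tau\Z$, up to sign conventions that I would fix by checking the case $p=1$ against \eqref{eq:zeros}. Tracking these signs is the only real bookkeeping obstacle.

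\textbf{Converse.} Given $z_1,\dots,z_p$ satisfying \eqref{eq:p_sum_zeros}, form
\[
g(z)=\prod_{l=1}^p \vartheta\!\left(z-z_l+\tfrac{1+\tau}{2}-c,\tau\right)\cdot e^{2\pi i \alpha z},
\]
where $c$ and $\alpha$ are chosen so that the zeros of $g$ are exactly the $z_l$, by \eqref{eq:zeros}. Then $g$ has automorphy factor $e^{2\pi i\alpha}$ under $z\mapsto z+1$ and $e^{-\pi i p\tau-2\pi i(pz-\sum z_l+\dots)}e^{2\pi i\alpha\tau}$ under $z\mapsto z+\tau$.

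The constraint \eqref{eq:p_sum_zeros} is exactly what allows an $\alpha\in a+\Z$ to match the required $b$-term. Hence $g\in\calT_{p,a,b}(\tau)$, and its zero set in $\mathcal P_\tau$ is the given multiset.
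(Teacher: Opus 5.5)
Your proposal is correct and follows the standard Farkas--Kra argument: Fourier expansion for (i)--(ii), the argument principle and the integral of $zf'/f$ for (iii), and a product of translated $\vartheta$'s for the converse. The paper gives no proof of its own and simply cites this material.

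The one real fix is the sign bookkeeping you flagged, because as written your claimed cancellation fails. Use the counterclockwise orientation $z_0\to z_0+1\to z_0+1+\tau\to z_0+\tau$. Then the sides paired by $z\mapsto z+1$ contribute $+\frac{1}{2\pi i}\int_{z_0}^{z_0+\tau}f'/f\equiv -pz_0-b-\frac{p\tau}{2}$ modulo $\Z$, not its negative. The sides paired by $z\mapsto z+\tau$ contribute $-\tau(a+m)+p\bigl(z_0+\frac12+\tau\bigr)$; your second piece is right, but the first should be $-\tau(a+m)$. With your signs the two $pz_0$ terms add to $2pz_0$ instead of cancelling. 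With the corrected signs they cancel, and the total is $\frac{p-2b}{2}+\frac{p-2a}{2}\tau$ modulo $\Z+\tau\Z$, matching the $p=1$ check against \eqref{eq:zeros}.

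In the converse you can take $c=0$, since the zeros of $\vartheta(\cdot,\tau)$ lie at $\frac{1+\tau}{2}+\Z+\tau\Z$. If $\sum_l z_l=\frac{p-2b}{2}+\frac{p-2a}{2}\tau+k+j\tau$ with $k,j\in\Z$, the choice $\alpha=a-j$ gives both required automorphy factors.
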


\section{Derivatives of theta-functions}\label{sec:thetaD}

We now shift our focus to derivatives of theta functions. To do so, we define the differential operators 
\begin{equation}
    \dd_z\coloneqq\frac{1}{2\pi i}\frac{d}{dz}\quad \text{and}
    \quad 
    P(\dd_z) =  \sum\limits_{d=0}^{m}c_d \dd_z^d, \quad \text{where}\quad P(X) = \sum\limits_{d=0}^{m}c_d X^d.
\end{equation}
Due to the Gaussian decay of the series expansion in \eqref{eq:def:thetac}, it follows that
\begin{equation}\label{eq:delta_theta_ab}
   \delta_z^m\thetac{a}{b}(z,\tau) = 
   \sum\limits_{k\in\Z}(k+a)^me^{\pi i \tau (k+a)^2}e^{2\pi i (z+b) (k+a)},\quad z\in\C,\, \tau\in\HH,\quad m\in\N_0.
\end{equation}

\begin{lemma}\label{lem:torsion-jet}
Let $\tau\in\HH$, $q\in\N, \, q\geq 2$ and let 
$\Theta(z) = \thetac{a}{b}(z,\tau)$ for some $a,b\in\C$.
Let $P$ be a
nonzero polynomial with
$$
\deg P\leq q-2.
$$
Then, for every $w\in\C$, the system of equations
\begin{equation}\label{eq:impossible_system}
    P(\dd_z)\Theta\left(w+\frac{j}{q}\right)=0,
\quad j=0,\dots,q-1,
\end{equation}
cannot be satisfied.
\end{lemma}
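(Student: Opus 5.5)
The plan is to turn the $q$ point conditions in \eqref{eq:impossible_system} into one jet condition at the single point $w$, imposed on a whole space of theta functions. Proposition \ref{prop:elem_properties_calTp} then gives a section vanishing at $w$ to a prescribed order, and this produces the contradiction.

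\textbf{Step 1: the translates lie in an order-$q$ theta space.} Put $\tau' := q\tau$ and $\Theta_j(z):=\Theta(z+\tfrac jq)$. I would first check that each $\Theta_j$ lies in a single space $\calT_{q,a',b'}(\tau')$. Iterating \eqref{eq:thetac_quasip} $q$ times gives
\[
\Theta(z+q\tau)=e^{-\pi i q^2\tau-2\pi i q(z+b)}\Theta(z)=e^{-\pi i q\tau'-2\pi i (qz+qb)}\Theta(z),
\]
and also $\Theta(z+1)=e^{2\pi i a}\Theta(z)$. So $\Theta\in\calT_{q,a,qb}(\tau')$. Translating by $j/q$ keeps the multiplier under $z\mapsto z+1$. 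It changes the multiplier under $z\mapsto z+\tau'$ only by the factor $e^{-2\pi i q\cdot j/q}=1$. Hence every $\Theta_j$ lies in $V:=\calT_{q,a,qb}(\tau')$, which is $q$-dimensional by Proposition \ref{prop:elem_properties_calTp}(i).

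\textbf{Step 2: the translates form a basis of $V$.} Take a discrete Fourier transform in $j$. By \eqref{eq:def:thetac},
\[
\sum_{j=0}^{q-1}e^{-2\pi i jr/q}\,\Theta_j
\]
is a nonzero multiple of the subseries $\Theta^{(r)}$ of \eqref{eq:def:thetac} over the indices $k\equiv r \pmod q$. Up to that nonzero multiple, this sum is $q\,e^{2\pi i(r+a)/q}\,\Theta^{(r)}$ when $a\neq 0$; when $a=0$ the exponents are integers and the unimodular constant drops out. Each $\Theta^{(r)}$ is a nonzero entire function. The $\Theta^{(r)}$ have pairwise disjoint exponent sets $\{k+a : k\equiv r\}$ in their expansions in the functions $e^{2\pi i z(k+a)}$. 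Distinct exponents give linearly independent expansions, by uniqueness of Fourier coefficients of the $1$-periodic entire function $e^{-2\pi i a z}f(z)$. Hence the $\Theta^{(r)}$ are linearly independent, and so are the $\Theta_j$. Having $q$ independent elements in the $q$-dimensional space $V$, they form a basis.

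By linearity, \eqref{eq:impossible_system} says $P(\dd_z)\Theta_j(w)=0$ for all $j$, since $P(\dd_z)$ commutes with translation. It is therefore equivalent to
\[
P(\dd_z)f(w)=0\qquad\text{for all } f\in V.
\]

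\textbf{Step 3: contradiction from jet separation.} Let $m=\deg P\le q-2$ and write $P(X)=\sum_{d\le m}c_dX^d$ with $c_m\neq0$. By Proposition \ref{prop:elem_properties_calTp}(iii), applied with $p=q$ and lattice $\Z+\tau'\Z$, I would choose a zero multiset in $\mathcal P_{\tau'}$ as follows.
\begin{itemize}
\item The reduction of $w$ modulo $\Z+\tau'\Z$ is taken with multiplicity exactly $m$.
\item The remaining $q-m\ge 2$ points are chosen different from $w$, with the last one fixed so that the sum condition \eqref{eq:p_sum_zeros} holds.
\end{itemize}
The second point count is where $\deg P\le q-2$ is used. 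We have $q-m-1\ge1$ free points, which we place away from $w$. The last point is then determined, and if it happens to land on $w$, we perturb one of the free points to move it off. The resulting $f\in V$ vanishes at $w$ to order exactly $m$. Therefore $\dd_z^df(w)=0$ for $d<m$ and $\dd_z^mf(w)\ne0$, which gives
\[
P(\dd_z)f(w)=c_m\dd_z^mf(w)\neq0.
\]
This contradicts Step 2.

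I expect the main care to be needed in Step 1, where the exact characteristic of the space $V$ must be computed correctly, and in the basis claim of Step 2. The jet-separation step is elementary once the problem is reduced to the single point $w$.
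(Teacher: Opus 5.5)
Your proposal is correct. Steps 1 and 2 match the paper, with one fixable slip in Step 2, and Step 3 takes a genuinely different, self-contained route.

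\textbf{Steps 1 and 2.} These follow the paper. The translates $\Theta(\cdot+j/q)$ are related to the canonical basis of $\calT_{q,a,qb}(q\tau)$ by a diagonal-times-Fourier matrix. So they form a basis, and the system says that $f\mapsto P(\dd_z)f(w)$ vanishes on all of $V$.

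The slip is in your Step 2 computation. For $a\notin\Z$ the untwisted combination $\sum_j e^{-2\pi i jr/q}\Theta_j$ is not a subseries, because
$\sum_{j=0}^{q-1}e^{2\pi i j(k+a-r)/q}=(1-e^{2\pi i a})/(1-e^{2\pi i (k+a-r)/q})\neq 0$ for every $k$. Use the weights $e^{-2\pi i j(r+a)/q}$ instead. These give exactly $q\,\Theta^{(r)}$, and $\Theta^{(r)}(z)=\thetac{(a+r)/q}{qb}(qz,q^2\tau)$. Your disjoint-exponent argument, or Proposition \ref{prop:elem_properties_calTp}(ii), then finishes Step 2.

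\textbf{Step 3: the real difference.} The paper finishes by citing Bauer--Szemberg for full rank of the jet matrix $(\dd_z^d\phi_s(w))_{s,d}$, i.e.\ that this degree-$q$ space separates $(q-2)$-jets. You instead prove the single instance you need directly from the Abel-type converse in Proposition \ref{prop:elem_properties_calTp}(iii).

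You place $w$ with multiplicity exactly $m=\deg P$. You then use the $q-m\ge 2$ remaining zeros to satisfy the sum condition \eqref{eq:p_sum_zeros} while staying off $w$. This produces $f\in V$ vanishing to order exactly $m$ at $w$, so $P(\dd_z)f(w)=c_m\dd_z^m f(w)\neq 0$.

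This is sound for two reasons. Zeros of elements of $V$ are $(\Z+q\tau\Z)$-periodic with multiplicity, so the order at $w$ equals the order at its reduction. Moving a free zero by $\epsilon$ moves the forced zero by $-\epsilon$, so the perturbation step works.

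\textbf{What each route buys.} Yours uses only facts already stated in the paper. It also shows exactly where $\deg P\le q-2$ enters. For $m=q-1$ the last zero is forced, and it lands on $w$ precisely at the finitely many torsion-type points where $qw$ lies in the prescribed sum class.

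The citation is shorter and gives full rank of the whole jet map at once. Running your construction for each $d\le m$ gives a triangular family and hence the same full-rank conclusion. So the general abelian-variety machinery is not needed in dimension one.
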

\begin{proof} 
Suppose that \eqref{eq:impossible_system} holds for some $P\neq 0$ of degree $m\leq q-2$, i.e., 
$$
P(X)=\sum\limits_{d=0}^{m}c_d X^d
$$
for some non-zero vector $c\in\C^{m+1}$.
By \cite[Lem.~1.4.]{FarkasKra2001} and \eqref{eq:thetac_quasip} we have
\begin{equation}\label{eq:jet_changebase}
    \thetac{a}{b}(z+\tfrac{j}{q},\tau)= \sum_{s=0}^{q-1} \thetac{(a+s)/q}{qb}(qz+j,q^2\tau) = \sum_{s=0}^{q-1} e^{2\pi i (a+s)j/q}\thetac{(a+s)/q}{qb}(qz,q^2\tau).
\end{equation}
By Proposition \ref{prop:elem_properties_calTp} (ii), the functions $\thetac{(a+s)/q}{qb}(qz,q^2\tau)$, $0\leq s\leq q-1$, yield a basis of $\calT_{q,a,qb}(q\tau)$.
The relations \eqref{eq:jet_changebase} describe a change of basis because 
 $\left(e^{2\pi i (a+s)j/q}\right)_{0\leq j,s\leq q-1}$ is the Fourier matrix multiplied by a diagonal invertible matrix.
 Hence, $\phi_s(z)=\thetac{a}{b}(z+\tfrac{s}{q},\tau)$,  $0\leq s\leq q-1$, is also a basis of  $\calT_{q,a,qb}(q\tau)$.

We can rewrite \eqref{eq:impossible_system} in matrix form as 
\begin{equation} \left(\dd_z^d\phi _s(w)\right)_{\substack{0\leq s\leq q-1 \\ 0\leq d\leq m}} \cdot c = 0\in \C^{q}.
\end{equation}
By \cite[Thm.~1]{BauerSzemberg1997},
$\left(\dd_z^d \phi_s(w)\right)_{\substack{0\leq s\leq q-1 \\ 0\leq d\leq m}}$ has full rank. Hence, $c=0$, contradicting $P\neq 0$. 

\end{proof}

\begin{lemma}
\label{lem:frobenius}
Let $p\in\N$, $b\in\C$ and $\tau\in\HH$.
Let $\phi_0,\dots,\phi_{p-1}$ be a basis of $\calT_{p,0,b}(\tau)$. Define
$$
\Delta(z_0,\dots,z_{p-1})
:=
\det[\phi_s(z_l)]_{l,s=0}^{p-1},
\quad
S(z):=\sum_{l=0}^{p-1}z_{l}.
$$
Then there exists a nonzero constant $C$ such that
$$
\Delta(z_0,\dots,z_{p-1})
=
C\, \thetac{1/2+p/2}{1/2+b+p/2}(S(z), \tau) \prod\limits_{0\leq l <l'< p}\thetac{1/2}{1/2}(z_{l'}-z_l,\tau).
$$
\end{lemma}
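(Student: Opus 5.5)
The plan is the classical Frobenius-type argument: show that $\Delta$ and the proposed right-hand side, viewed as functions of one variable $z_l$ with the others fixed, lie in the same one-dimensional space of quasi-periodic functions with the same zeros. Their ratio is then a constant independent of all variables, and it remains to check that this constant is nonzero.

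\emph{Step 1: quasi-periodicity of $\Delta$.} Fix all variables except $z_l$. Expanding along the $l$-th row shows that $z_l\mapsto\Delta$ is a linear combination of the $\phi_s$, hence lies in $\calT_{p,0,b}(\tau)$. By Proposition \ref{prop:elem_properties_calTp}(iii), if it is nonzero it has exactly $p$ zeros in $\mathcal P_\tau$, and their sum lies in $\frac{p-2b}{2}+\frac p2\tau+\Z+\tau\Z$.

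\emph{Step 2: locating the zeros.} Since the matrix has two equal rows, $\Delta=0$ whenever $z_l\equiv z_{l'}$ modulo $\Z+\tau\Z$. This holds because the $\phi_s$ share the same multipliers, so the rows are proportional. That gives $p-1$ zeros $z_l=z_{l'}$, $l'\neq l$. The sum relation then forces the remaining zero $z_l=\zeta$ to satisfy
\[
\zeta+\sum_{l'\ne l}z_{l'}\equiv \tfrac{p-2b}{2}+\tfrac p2\tau \pmod{\Z+\tau\Z},
\]
that is, $S(z)\equiv \frac{p-2b}{2}+\frac p2\tau$.

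\emph{Step 3: matching with the right-hand side.} By \eqref{eq:zeros}, $\thetac{1/2}{1/2}$ vanishes exactly on the lattice, and $\thetac{1/2+p/2}{1/2+b+p/2}(S,\tau)$ vanishes exactly when
\[
S\equiv \tfrac{1-2(1/2+b+p/2)}{2}+\tfrac{1-2(1/2+p/2)}{2}\tau=-b-\tfrac p2-\tfrac p2\tau .
\]
This agrees with Step 2 modulo $\Z+\tau\Z$. Next I would verify with \eqref{eq:thetac_quasip} that the right-hand side $R$, as a function of $z_l$, has the same multipliers as $\calT_{p,0,b}(\tau)$. Under $z_l\mapsto z_l+1$, every factor picks up $e^{2\pi i a}$ with $a=\frac12$ or $\frac{1+p}{2}$; here one tracks signs from the $p-1$ factors with $z_{l'}-z_l$, using that $\thetac{1/2}{1/2}$ is odd, and the total phase should be trivial. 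Under $z_l\mapsto z_l+\tau$, the exponents $-\pi i(2b'+2z+\tau)$ add up to $-\pi ip\tau-2\pi i(pz_l+b)$ because the $z_{l'}$-dependence cancels. This bookkeeping is the main computational obstacle.

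\emph{Step 4: conclusion.} Assuming the multipliers match, $\Delta/R$ is an elliptic function of each $z_l$ with no poles, since the zeros of $R$ are simple and generically distinct and are zeros of $\Delta$. It is therefore constant in each variable separately, hence a global constant $C$.

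\emph{Step 5: $C\neq0$.} The identity $\Delta=CR$ holds whatever $C$ is, so the content is that $\Delta\not\equiv0$. This is immediate: if $\Delta$ vanished for all $(z_0,\dots,z_{p-1})$, the functions $\phi_s$ would be linearly dependent as functions, by the standard fact that linearly independent functions admit points making the evaluation matrix invertible. That contradicts their being a basis.

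Alternatively, the whole identity is a special case of the elliptic determinant evaluations of Rosengren–Schlosser, and one could cite it directly after matching the basis $\phi_s$ to theirs by an invertible change of basis, which only alters $C$.
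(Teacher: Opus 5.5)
Your proof is correct, but it takes a different route from the paper. The paper does not prove the determinant evaluation itself. It quotes Rosengren--Schlosser [Prop.~3.4], which gives $\Delta=C_0e^{\pi i pS}\thetac{1/2}{1/2}(S+b+\tfrac p2+\tfrac p2\tau,\tau)\prod_{l<l'}\thetac{1/2}{1/2}(z_{l'}-z_l,\tau)$. It then uses \eqref{eq:thetac_vs_vartheta} to absorb the $S$-dependent exponential and rewrite the first factor as $\thetac{1/2+p/2}{1/2+b+p/2}(S,\tau)$. In other words, the paper follows the alternative you mention in your last sentence. Its argument that $C\neq0$ is the same as your Step~5.

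Your main argument instead reproves the evaluation by the classical Frobenius method: quasi-periodicity in each variable, zero matching, then Liouville. This buys self-containedness, since it uses only Proposition \ref{prop:elem_properties_calTp}(iii), \eqref{eq:thetac_quasip} and \eqref{eq:zeros}. It also explains where the characteristic $(\tfrac12+\tfrac p2,\tfrac12+b+\tfrac p2)$ comes from: the sum-of-zeros rule forces it. The paper's route is shorter but relies on correctly matching an external identity to the present normalizations.

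The bookkeeping you flagged does work out:
\begin{itemize}
\item Under $z_l\mapsto z_l+1$ the total phase is $(-1)^{p+1}(-1)^{p-1}=1$.
\item Under $z_l\mapsto z_l+\tau$ the exponents sum to $-\pi i p\tau-2\pi i(pz_l+b)+\pi i(-2-2l)$, and the last term is harmless.
\end{itemize}

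Two small points of precision:
\begin{itemize}
\item In the variable $z_l$, both $\Delta$ and $R$ lie in the $p$-dimensional space $\calT_{p,0,b}(\tau)$, not a one-dimensional one. What is one-dimensional is the subspace vanishing at the $p-1$ points $z_{l'}$.
\item To pass from ``constant in $z_l$ for generic values of the other variables'' to a global constant, note the following. The meromorphic function $\Delta/R$ on $\C^p$ has all partial derivatives vanishing on a dense open set, so it is constant on the connected complement of $\{R=0\}$. Then $\Delta=CR$ everywhere by continuity.
\end{itemize}
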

\begin{proof} 
    By the determinant formula \cite[Prop.~3.4.]{RosengrenSchlosser2006} there exists a constant $C\in\C$ such that
    \begin{equation*}
       \begin{split}
           \det[\phi_s(z_l)]_{l,s=0}^{p-1} & = C_0 
           e^{\pi i p S(z)}\, \thetac{1/2}{1/2}(S(z) +b+\tfrac{p}{2}+\tfrac{p}{2}\tau, \tau) 
    \hspace{-3pt}\prod\limits_{0\leq l <l'< p}
           \hspace{-3pt}\thetac{1/2}{1/2}(z_{l'}-z_l,\tau).
       \end{split} 
    \end{equation*}
    By \eqref{eq:thetac_vs_vartheta}, we have
    \begin{equation}
 \begin{split}
        &\hspace{16pt}e^{2\pi i p S(z)/2}\, \thetac{1/2}{1/2}(S(z) +b+\tfrac{p}{2}+\tfrac{p}{2}\tau, \tau) \\
        & =
        e^{ -\frac{\pi i p^2 \tau}{4} - \pi i p b - \frac{\pi i p (p+1)}{2} }\, \thetac{1/2+p/2}{1/2+b+p/2}(S(z), \tau).
 \end{split}
    \end{equation}
    The exponential term on the right-hand side is independent of $z$, so we can absorb it together with $C_0$ into a new constant $ C$ and obtain
    \begin{equation}
        \det[\phi_s(z_l)]_{l,s=0}^{p-1} = C\, \thetac{1/2+p/2}{1/2+b+p/2}(S(z), \tau) \prod\limits_{0\leq l <l'<p}\thetac{1/2}{1/2}(z_{l'}-z_l,\tau).
    \end{equation}
Since $\phi_0,\dots ,\phi_{p-1}$ are linearly independent, their evaluation determinant is not identically zero. Hence, the constant $C$ is not zero.
\end{proof}

\begin{theorem}\label{thm:primitive-derivative}
Let $a,b\in\C$ and $n,p,q\in\N$ satisfy
$$
(p,q)=1,
\quad
q\geq {n}p+2.
$$
For every $w\in\C$, every $\tau \in \mathbb H$ and every polynomial $P$ of degree $n$, the map
$$
\calT_{p,a,b}(\tau)\to \C^q,
\quad
f \mapsto
\left({P(\dd_z)} f\left(w+\tfrac{r}{q}\right)\right)_{r=0}^{q-1}
$$
is injective.
\end{theorem}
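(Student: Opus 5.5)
The plan is to convert the injectivity statement into a jet-vanishing statement for a single first-order theta function at $q$-torsion translates, and then to invoke the jet separation of Bauer--Szemberg as in Lemma \ref{lem:torsion-jet}, this time with jets of order up to $np$. As a consistency check, for $p=1$ the space $\calT_{1,a,b}(\tau)$ is spanned by $\thetac{a}{b}(\cdot,\tau)$, and the claim is exactly Lemma \ref{lem:torsion-jet} with $\deg P=n\le q-2$. This suggests that the general case should cost a factor $p$ in the jet order, matching the hypothesis $q\ge np+2$.

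\emph{Step 1 (CRT decomposition).} Suppose $f\in\calT_{p,a,b}(\tau)$ is nonzero and $P(\dd_z)f(w+\tfrac rq)=0$ for $r=0,\dots,q-1$. By Proposition \ref{prop:elem_properties_calTp}(ii) we can write $f(z)=\sum_{s=0}^{p-1}c_s\thetac{(a+s)/p}{b}(pz,p\tau)$, so its Fourier expansion is $f(z)=\sum_{k\in\Z}\alpha_k e^{2\pi i(k+a)z/p}$. Here $\alpha_k$ depends only on $c_{k \bmod p}$ times a Gaussian factor. Applying $P(\dd_z)$ multiplies $\alpha_k$ by $P((k+a)/p)$. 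Taking the discrete Fourier transform in $r$ of the $q$ equations splits them according to the class of $k$ modulo $q$. Since $(p,q)=1$, the Chinese remainder theorem shows that each class modulo $q$ meets every class modulo $p$ exactly once modulo $pq$. The system therefore becomes $q$ equations of the form $\sum_s c_s\,P(\dd_z)\psi_{v(u,s)}(w)=0$, where $u$ runs over residues modulo $q$, $v(u,s)$ is the CRT residue modulo $pq$, and $\psi_v$ are the canonical basis functions of a suitable space $\calT_{pq,\ast,\ast}$ (with $p\tau$ rescaled). This is the analogue of the change of basis \eqref{eq:jet_changebase}, and it identifies the map in the theorem, up to invertible matrices, with a $q\times p$ jet-evaluation matrix.

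\emph{Step 2 (linearizing the shifts).} Each $\thetac{(a+s)/p}{b}(pz,p\tau)$ is, by \eqref{eq:thetac_vs_vartheta}, a single theta function $\vartheta(pz+\cdot+s\tau,p\tau)$ times an exponential $e^{2\pi i s z+\dots}$. Using Leibniz's rule, $P(\dd_z)$ applied to $e^{2\pi i s z}G(z)$ equals $e^{2\pi i s z}P(\dd_z+s)G(z)$. A nontrivial kernel vector $(c_s)$ thus yields a nonzero differential relation, with coefficients polynomial in $s$ of degree $n$, among translates of one theta function at the $q$ points $pw+\tfrac{j}{q}$; recall that $p$ is invertible modulo $q$, so $\{pr/q\}$ runs over all $j/q$ modulo $1$. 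The $p$ translates by $s\tau$ are to be absorbed by passing to the $p$-fold product structure. The goal is a nonzero operator $\widetilde P(\dd_z)$ of degree at most $np\le q-2$ such that $\widetilde P(\dd_z)\Theta(\tilde w+\tfrac jq)=0$ for all $j$ and a single $\Theta=\thetac{\tilde a}{\tilde b}$. Equivalently, the goal is a nonzero element of $\calT_{q,\ast,\ast}$ whose $np$-jet at one point vanishes. Lemma \ref{lem:torsion-jet}, i.e.\ \cite[Thm.~1]{BauerSzemberg1997}, then gives the contradiction.

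\emph{Main obstacle.} Step 2 is the delicate part: it must combine $p$ distinct translates into one jet condition without the degree exceeding $np$. My first attempt would use the determinant Lemma \ref{lem:frobenius}. If the $q\times p$ matrix from Step 1 had rank below $p$, every $p\times p$ minor would vanish. Each minor is a confluent limit of $\det[\phi_s(z_l)]$, and Lemma \ref{lem:frobenius} factors it as $\thetac{1/2+p/2}{1/2+b+p/2}(S(z),\tau)\prod_{l<l'}\thetac{1/2}{1/2}(z_{l'}-z_l,\tau)$. Applying the operators $P(\dd_{z_l})$ and specializing the $z_l$ to $p$ of the torsion points $w+\tfrac rq$ gives a total differential order of at most $np$ acting on an expression whose only varying factor is a first-order theta in $S(z)$. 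As $r$ ranges over $p$-subsets of $\{0,\dots,q-1\}$, the sum $S$ ranges over translates $pw+\tfrac{j}{q}$. Vanishing of all minors should then produce the forbidden system \eqref{eq:impossible_system} of degree at most $np\le q-2$. The part I am least sure of is whether the Vandermonde-type factors $\thetac{1/2}{1/2}(z_{l'}-z_l)$, which are nonzero at distinct torsion points, can be divided out cleanly after differentiation.
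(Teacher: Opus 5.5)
\textbf{There is a genuine gap, at exactly the step you flag.} Your last paragraph has the right strategy, and it is the paper's: rank deficiency forces the $p\times p$ minors to vanish, and Lemma \ref{lem:frobenius} turns each minor into a jet of the single theta function $\Theta=\thetac{1/2+p/2}{1/2+b+p/2}(\cdot,\tau)$. But your argument breaks as stated, and Steps 1--2 do not repair it.

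\textbf{Why arbitrary subsets fail.} Specialize at an arbitrary $p$-subset $J$ of the points $w+\tfrac rq$. Leibniz's rule then gives $M_J=Q_J(\dd_z)\Theta\bigl(pw+\tfrac1q\sum_{r\in J}r\bigr)$. The lower-order coefficients of $Q_J$ are derivatives of $V=\prod_{l<l'}\thetac{1/2}{1/2}(z_{l'}-z_l,\tau)$ at the difference pattern of $J$, so they depend on $J$. Vanishing of all minors therefore gives relations with \emph{different} operators at the torsion translates. That is not a system of the form \eqref{eq:impossible_system}, and Lemma \ref{lem:torsion-jet} does not apply.

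\textbf{The missing idea.} Use only the minors on translates of one fixed window, e.g.\ the consecutive rows $j,\dots,j+p-1$ (mod $q$). Since $V$ is invariant under simultaneous translation of all $z_l$, each of its partial derivatives at $(w+\tfrac jq,\dots,w+\tfrac{j+p-1}{q})$ is a constant independent of $w$ and $j$. Hence $M_j=M(w+\tfrac jq)$ with $M(w)=Q(\dd_z)\Theta\bigl(pw+\tfrac{p(p-1)}{2q}\bigr)$ for a \emph{single} operator $Q$.

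Nothing needs to be divided out. The derivatives of $V$ are simply absorbed into the coefficients of $Q$, and only the top coefficient matters. For monic $P$ that coefficient arises only when no derivative falls on $V$, so it equals $CV_0\neq0$, where $V_0=V(0,\tfrac1q,\dots,\tfrac{p-1}{q})$. Thus $Q\neq0$ and $\deg Q=np\le q-2$.

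Coprimality enters exactly here: $pj/q$ runs through all residues $j'/q$ modulo $1$. The quasi-periodicity of $\Theta$ under $z\mapsto z+1$ removes the integer parts, which yields \eqref{eq:impossible_system} for $Q$ at $w'=pw+\tfrac{p(p-1)}{2q}$.

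\textbf{Smaller issues.}
\begin{enumerate}
\item Lemma \ref{lem:frobenius} is stated for $\calT_{p,0,b}(\tau)$, so you should first pass to $a=0$ via $f\mapsto e^{-2\pi i az}f$. This replaces $P(X)$ by $P(X+a)$, which has the same degree. The resulting $1$-periodicity of $P(\dd_z)\phi_s$ is also what makes the wrapped-around windows genuine minors of the $q\times p$ matrix.
\item Your reformulation at the end of Step 2 is backwards. Since $\dim\calT_{q,\ast,\ast}=q\ge np+2$, nonzero elements with vanishing $np$-jet at a point always exist. What Bauer--Szemberg provides, and what Lemma \ref{lem:torsion-jet} uses, is surjectivity of the $np$-jet map. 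Equivalently, no nonzero functional $f\mapsto\widetilde P(\dd_z)f(\tilde w)$ with $\deg\widetilde P\le q-2$ vanishes on the whole space.
\item In Step 1 the frequencies should be $k+a$, not $(k+a)/p$, since $f(z+1)=e^{2\pi i a}f(z)$.
\end{enumerate}
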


\begin{proof}
{
Since 
\begin{equation}
    \calT_{p,a,b}\to \calT_{p,0,b+a\tau}, \quad f\mapsto e^{-2\pi i a\cdot} f
\end{equation}
is an isomorphism and 
\begin{equation}
    \dd_z f(z) = \dd_z \left(e^{2\pi i az}g(z)\right)= e^{2\pi i a z}(a g(z) + \dd_z g(z)),\quad g(z) = e^{-2\pi i az} f(z),
\end{equation}
by Leibniz' rule there exists a polynomial $Q$ of degree at most $n$ such that
\begin{equation}\label{eq:P_vs_Q_for_a}
    P(\dd_z) f (z) = e^{2\pi i a z} Q(\dd_z) g(z),\quad g(z) = e^{-2\pi i az} f(z). 
\end{equation}
The polynomial $Q$ has degree $n$ because its leading coefficient equals the leading coefficient of $P$. Thus, we can assume without loss of generality that $a=0$ and that $P$ has a leading coefficient equal to one.
}

{Let $\phi_0,\dots,\phi_{p-1}$ be a basis of $\calT_{p,0,b}(\tau)$.}  Suppose, to
the contrary, that the stated map is not injective.  Then the
$q\times p$ matrix
$$
R_{r,s}:={{P(\dd_z)}}\phi_s\left(w+\tfrac{r}{q}\right),
\quad
r=0,\dots,q-1,
\quad
s=0,\dots,p-1,
$$
has rank strictly smaller than $p$.  Hence, all of its $p\times p$ minors
vanish.

For $j\in\Z$, define the consecutive minor
$$
M_j:=
\det\left[
{P(\dd_z)}\phi_s\left(w+\tfrac{j+l}{q}\right)
\right]_{l,s=0}^{p-1}.
$$
Each $P(\dd_z)\phi_s$ is $1$-periodic, so reducing the indices
$j+l$ modulo $q$ only permutes the rows. 
Rank failure gives
$$
M_j=0,
\quad j=0,\dots,q-1.
$$
Let
$$
\Delta(z_0,\dots,z_{p-1})
:=
\det[\phi_s(z_l)]_{l,s=0}^{p-1}.
$$
By Lemma~\ref{lem:frobenius}, we have
$$
\Delta(z_0,\dots,z_{p-1})
=
C\, \thetac{1/2+p/2}{1/2+b+p/2}(S(z), \tau)V(z_0,\dots,z_{p-1}) ,
$$
where
$$
S(z_0,\dots,z_{p-1}):=\sum_{l=0}^{p-1} z_l,
\quad
V(z_0,\dots,z_{p-1})
:=\prod\limits_{0\leq l<l'< p}\thetac{1/2}{1/2}(z_{l'}-z_l,\tau).
$$
We apply $P(\dd_z)$ to each row of $[\phi_s(z_l)]_{l,s=0}^{p-1}$ with respect to $z_l$ and denote this by 
$$
P(\dd_{z_0})\cdots P(\dd_{z_{p-1}}) [\phi_s(z_l)]_{l,s=0}^{p-1}.
$$
We then restrict the argument to 
$$
z_l=w+\tfrac{l}{q},
\quad l=0,\dots,p-1.
$$
On the left, multilinearity of the determinant gives
$$
\left.
P(\dd_{z_0})\cdots P(\dd_{z_{p-1}})\Delta
\right|_{z_l=w+l/q}
=
\det\left[
P(\dd_z)\phi_s\left(w+\frac{l}{q}\right)
\right]_{l,s=0}^{p-1}
=:M(w).
$$
At the restricted points, we have
$$
z_{l'}-z_l=\tfrac{l'-l}{q}.
$$
As $1\leq l'-l\leq p-1<q$, none of these differences lies in
$\Z+\tau\Z$.  Since $\thetac{1/2}{1/2}(\cdot,\tau)$ has its only zero at the
origin modulo $\Z+\tau\Z$, it follows that
$$
V_0:=
V\left(w,w+\tfrac1q,\dots,w+\tfrac{p-1}{q}\right)
=
\prod\limits_{0\leq l <l'< p}\thetac{1/2}{1/2}(\tfrac{l'-l}{q},\tau)
\neq0.
$$
The above number is independent of $w$.
Let 
\begin{equation}
    \Theta(z)=\thetac{1/2+p/2}{1/2+b+p/2}(z,\tau).
\end{equation}
By Leibniz' rule, after the restriction $z_l=w+l/q$, every derivative of
$V$ is evaluated at the fixed tuple of differences
$\{(l'-l)/q\}_{l<l'}$ and is therefore a constant.  Hence, there are
constants $c_0,\dots,c_{np}$ such that
$$
M(w)
=
\sum_{d=0}^{np}c_d
\dd_z^d\Theta \left(pw+\tfrac{p(p-1)}{2q}\right).
$$
Equivalently,
$$
M(w)=Q(\dd_z)\Theta \left(pw+\tfrac{p(p-1)}{2q}\right),
$$
for a polynomial $Q$ of degree at most $np$.

The coefficient $c_{np}$ of the derivative of the highest order  comes only from the term in which the highest order
derivative part of every operator $P(\dd_{z_l})$
falls on the factor $\Theta(S(z_0,\dots,z_{p-1}))$ and no derivative falls on $V$. Therefore,
$$
c_{np}
=
C\prod_{0\leq l<l'< p}
\thetac{1/2}{1/2}\left(\tfrac{l'-l}{q},\tau\right)
=
CV_0
\neq0.
$$
Thus, $Q\neq0$ and $\deg Q={{n}}p$.

Since $M_j=M(w+j/q)$, the vanishing of the consecutive minors gives
$$
Q(\dd_z)\Theta\left(
pw+\tfrac{pj}{q}+\tfrac{p(p-1)}{2q}
\right)=0,
\quad j=0,\dots,q-1.
$$
Since $(p,q)=1$, for each $j$ there exists a residue $j'\in\{0,\dots,q-1\}$
and an integer $n_j$ such that
$$
\frac{pj}{q}=\frac{j'}{q}+n_j.
$$
Let
$$
w':=pw+\frac{p(p-1)}{2q}.
$$
The preceding vanishing can be expressed as
$$
Q(\dd_z) \Theta\left(w'+\tfrac{j'}{q}+n_j\right)=0,
\quad j'=0,\dots,q-1,
$$
after reordering the residues. 
By \eqref{eq:thetac_quasip}, $\Theta(z+n_j)=e^{2\pi i(1/2+p/2)n_j}\Theta(z)$ for all $z\in\C$, and hence $Q(\dd_z)\Theta(z+n_j)=e^{2\pi i(1/2+p/2)n_j}Q(\dd_z)\Theta(z)$. Since this factor is nonzero, it follows that
$$
Q(\dd_z)\Theta\left(w'+\tfrac{j}{q}\right)=0,
\quad j=0,\dots,q-1.
$$
Finally,
$$
\deg Q={{n}}p\leq q-2.
$$
Since $\Theta=\thetac{1/2+p/2}{1/2+b+p/2}(\cdot,\tau)$ and $q\geq np+2\geq 2$, this contradicts Lemma~\ref{lem:torsion-jet}. Hence, the original derivative evaluation map is injective.
\end{proof}

\section{Proof of the main result}

\begin{proof}[Proof of Theorem \ref{thm:main-intro}]

It suffices to show that condition (2) implies condition (1). To this end, let
$\Lambda=A\Z^2$ be a lattice with $A\in\mathrm{GL}(2,\R)$ and density
$D(\Lambda)=\tfrac{q}{p}$, where $q,p \in \N$ are coprime and $q\geq p+2$.

\textbf{Step 1.} 
As explained before in \eqref{eq:A_decomposed}, we may assume that $\det A>0$;
we then have $\det A=p/q$.  Set
$$
B=\begin{psmallmatrix}p/q&0\\0&1\end{psmallmatrix},
\quad S=BA^{-1}\in\mathrm{SL}(2,\R).
$$
Note that $SA\Z^2=B\Z^2=\tfrac pq\Z\times\Z$.  Write the Iwasawa
decomposition of $S$ as $S=V_{\eta_0}D_{\nu_0}R_{\sigma_0}$.  Since
$\widehat R_{\sigma_0}h_1=\overline{\sigma_0}h_1$, the unimodular scalar
$\overline{\sigma_0}$ does not affect the frame property.  Hence, by
\eqref{eq:Gabor_Mp_invariance}, $\calG(h_1,A\Z^2)$ is a frame if and only if
$$
\calG(g_0,\tfrac pq\Z\times\Z)\hspace{2mm}\text{is a frame}, \ \text{where}
\hspace{2mm} g_0:=\widehat V_{\eta_0}\widehat D_{\nu_0}h_1.
$$
We are now in the setting of Theorem \ref{thm:ZZ_crit_cited}.

     \textbf{Step 2.}
     Let 
     \begin{equation}
         \QQ(x,\omega)_{rs} = \zak g_0( x+\tfrac{pr}{q}, \omega +\tfrac{s}{p}) ,\quad 0\leq r\leq q-1,\, 0\leq s\leq p-1.
     \end{equation}
     We already computed $\zak (\widehat{V}_{\eta_0} \widehat{D}_{\nu_0}h_1)$ in \eqref{eq:Zak_general_h1}, but to make the criterion in Theorem \ref{thm:ZZ_crit_cited}
     compatible with the Theorems in Sections \ref{sec:theta} and \ref{sec:thetaD}, we combine it with \eqref{eq:Zak_ft}.

     Since the Fourier transform is the metaplectic operator $\widehat{R}_i$, the composition
     $\widehat{R}_i \widehat{V}_{\eta_0} \widehat{D}_{\nu_0}$ is projected to the matrix $R_i V_{\eta_0} D_{\nu_0}\in \mathrm{SL}(2,\R)$. By the Iwasawa decomposition \eqref{eq:A_decomposed}, there exist $\eta\in\R$, $\nu>0$ and $|\sigma|=1$ such that
     \begin{equation}
         R_i V_{\eta_0} D_{\nu_0} = V_{\eta} D_{\nu} R_\sigma.
     \end{equation}
     Choose compatible metaplectic lifts of this identity; the possible overall sign
     is immaterial. Therefore, by \eqref{eq:Zak_ft} and \eqref{eq:pi_Mp} we have
     \begin{equation}
         \begin{split}
             \zak (\widehat{V}_{\eta_0} \widehat{D}_{\nu_0}h_1) (x,\omega) &= 
         e^{2\pi i x\omega} \zak (\widehat{R}_i \widehat{V}_{\eta_0} \widehat{D}_{\nu_0} h_1)(\omega, -x)  \\
         &= e^{2\pi i x\omega} \zak (\widehat{V}_{\eta} \widehat{D}_{\nu}\widehat{R}_\sigma h_1)(\omega, -x).
         \end{split}
     \end{equation}
     By \eqref{eq:Zak_general_h1} and \eqref{eq:delta_theta_ab} we may write
     \begin{equation}
         \begin{split}
                          \zak (\widehat{V}_{\eta_0} \widehat{D}_{\nu_0}h_1) (x,\omega)
        &= -\tfrac{\overline{\sigma}}{\nu^{3/2}}
        \sum_{k\in\Z}(k-\omega) e^{\pi i (\eta+i/\nu^2)(k-\omega)^2} e^{-2\pi i (k-\omega) x} \\
             &= \tfrac{\overline{\sigma}}{\nu^{3/2}}
        \sum_{k\in\Z}(k+\omega) e^{\pi i (\eta+i/\nu^2)(k+\omega)^2} e^{2\pi i (k+\omega) x} \\
        & = \tfrac{\overline{\sigma}}{\nu^{3/2}} \delta_z\thetac{\omega}{0}(x,\tau),\quad \tau = \eta+i/\nu^2
        .
         \end{split}
     \end{equation}
Thus,
\begin{equation}
        \QQ(x,\omega)_{rs}
    = 
    \tfrac{\overline{\sigma}}{\nu^{3/2}} \delta_z\thetac{\tfrac{p\omega+s}{p}}{0}(p(\tfrac{x}{p}+\tfrac{r}{q}),\tau).
\end{equation}

\textbf{Step 3.}
We apply 
Theorem~\ref{thm:primitive-derivative} for the space $\calT_{p,p\omega,0}(\tau/p)$ with $n=1$ and $P(X)=X$. By Proposition \ref{prop:elem_properties_calTp}~(ii), the functions
\begin{equation}
\phi_s(z):=\thetac{(p\omega+s)/p}{0}(pz,\tau),\quad 0\leq s\leq p-1,
\end{equation}
form a basis of this space.  
The chain rule gives
\begin{equation}
\dd_z \phi_s(w)
=p\left.\dd_z\thetac{(p\omega+s)/p}{0}(z,\tau)\right|_{z=pw}.
\end{equation}
Consequently, up to the common nonzero scalar
$\overline\sigma/(p\nu^{3/2})$, 
the matrix $\QQ(x,\omega)$ is the matrix
representation of the linear map
\begin{equation}
    \calT_{p,p\omega,0}(\tfrac{\tau}{p})\to \C^{q},\quad f\mapsto
\left(\dd_z f\left(\tfrac{x}{p}+\tfrac{r}{q}\right)\right)_{r=0}^{q-1}.
\end{equation}
Since $(p,q)=1$ and
$q\geq p+2$, the matrix $\QQ(x,\omega)$ has full rank for every $x$ and every
$\omega$.  
By the quasi-periodicity of the Zak transform \eqref{eq:Z_quasip}, the matrices
$\QQ(x+n,\omega+m)$ and $\QQ(x,\omega)$ differ by left and right
multiplication by unitary diagonal and permutation matrices. Hence, they have
the same singular values for all $x,\omega\in\R$ and $n,m\in\Z$.
On the compact set $[0,1]^2$, the singular values of $\QQ(x,\omega)$ depend continuously on its entries. 
Since $g_0$ is a Schwartz function, it belongs to $W(C,\ell^1)$ and the entries of $\QQ(x,\omega)$ depend continuously on $(x,\omega)$.
Therefore, compactness arguments guarantee that the full rank implies that all singular values of $\QQ(x,\omega)$ lie in a compact interval $[A,B]\subseteq(0,\infty)$ independent of $(x,\omega)$.
The rational Zak criterion in
Theorem~\ref{thm:ZZ_crit_cited} then implies that
$\calG(g_0,\tfrac{p}{q}\Z\times \Z)$ is a frame. By Step~1,
$\calG(h_1,A \Z^2)$ is therefore a frame.
\end{proof}

\section{Concluding remarks}\label{sec:conclusion}

We note that Theorem \ref{thm:main-intro} has a straightforward generalization to linear combinations of higher-order Hermite functions.

\begin{corollary}\label{cor:higher_order}
    Let $\Lambda=A\Z^2$ be a lattice such that $A\in\mathrm{GL}(2,\R)$ with $D(\Lambda)=\frac qp$, where $n, p,q\in\N$ and $p,q$ are coprime.
If
$
q\geq n p+2
$ 
and $g\in L^2(\R)$ is a non-trivial linear combination of $h_0,\dots, h_{n}$, then 
$
\calG(g,\Lambda)
$
is a frame for $L^2(\R)$.
\end{corollary}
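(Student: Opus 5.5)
The plan is to repeat the proof of Theorem \ref{thm:main-intro}, now calling Theorem \ref{thm:primitive-derivative} with a general polynomial $P$ of degree at most $n$ in place of $P(X)=X$. Write $g=\sum_{k=0}^{n}a_k h_k$ with $a\neq 0$.

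\textbf{Step 1 (reduction to a separable lattice).} As in Step 1 of the main proof, choose $S\in\mathrm{SL}(2,\R)$ with $S\Lambda=\tfrac pq\Z\times\Z$, and write $S=V_{\eta_0}D_{\nu_0}R_{\sigma_0}$. The operator $\widehat R_{\sigma_0}$ multiplies $h_k$ by $\overline{\sigma_0}^{\,k}$. Hence $\widehat R_{\sigma_0}g$ is again a nontrivial combination of $h_0,\dots,h_n$, with the same top index $n'\le n$ of a nonzero coefficient. By \eqref{eq:Gabor_Mp_invariance}, it suffices to show that $\calG(g_0,\tfrac pq\Z\times\Z)$ is a frame, where $g_0=\widehat V_{\eta_0}\widehat D_{\nu_0}\widehat R_{\sigma_0}g$.

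\textbf{Step 2 (Zak transform as a polynomial derivative of a theta function).} Pass through the Fourier transform via \eqref{eq:Zak_ft}, as in Step 2 of the main proof, rewriting $R_iV_{\eta_0}D_{\nu_0}R_{\sigma_0}=V_\eta D_\nu R_\sigma$. Since $\widehat R_\sigma$ again only rescales each $h_k$, we need the Zak transform of $\widehat V_\eta\widehat D_\nu h_k$.

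\begin{itemize}
\item We have $h_k(t)=H_k(t)e^{-\pi t^2}$, where $H_k$ is a polynomial of exact degree $k$.
\item Therefore $\widehat V_\eta\widehat D_\nu h_k(t)=\nu^{-1/2}H_k(t/\nu)e^{\pi i\tau t^2}$ with $\tau=\eta+i/\nu^2$.
\item It follows that $\zak(\widehat V_\eta\widehat D_\nu\widehat R_\sigma g)(\omega,-x)$ equals, up to the phase $e^{2\pi i x\omega}$, the expression $\sum_{k\in\Z}P(k+\omega)e^{\pi i\tau(k+\omega)^2}e^{2\pi i(k+\omega)x}$. Here $P$ is a polynomial of degree exactly $n'\le n$; it is nonzero because $g\neq0$ and the $H_k$ have distinct degrees.
\item By \eqref{eq:delta_theta_ab}, this is $P(\dd_z)\thetac{\omega}{0}(x,\tau)$.
\end{itemize}

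Sign and reflection conventions may change $P(X)$ to $P(-X)$ or a rescaling of it. This does not affect the degree, so I would not track them. Consequently $\QQ(x,\omega)_{rs}$ is a nonzero multiple of $P(\dd_z)\thetac{(p\omega+s)/p}{0}(p(\tfrac xp+\tfrac rq),\tau)$.

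\textbf{Step 3 (full rank and frame bounds).} Use the chain rule: with $\phi_s(z)=\thetac{(p\omega+s)/p}{0}(pz,\tau)$, we have $\widetilde P(\dd_z)\phi_s(w)=(P(\dd_z)\thetac{\cdot}{0})(pw)$ for $\widetilde P(X)=P(X/p)$. The polynomial $\widetilde P$ still has degree $n'$. Thus $\QQ(x,\omega)$ is, up to a nonzero scalar, the matrix of $f\mapsto(\widetilde P(\dd_z)f(\tfrac xp+\tfrac rq))_r$ on $\calT_{p,p\omega,0}(\tau/p)$.

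\begin{itemize}
\item If $n'\ge1$, Theorem \ref{thm:primitive-derivative} applies with $n'$ in place of $n$, because $q\ge np+2\ge n'p+2$. Hence $\QQ(x,\omega)$ has full rank $p$.
\item If $n'=0$, then $g$ is a multiple of $h_0$ and $D(\Lambda)=q/p>1$, so the claim is the Lyubarskii--Seip--Wallst\'en theorem. Alternatively, one can use the proof of Theorem \ref{thm:primitive-derivative} with $\deg Q=0\le q-2$.
\end{itemize}

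Quasi-periodicity, continuity of the entries (the function $g_0$ is Schwartz) and compactness of $[0,1]^2$ then give uniform singular-value bounds. Theorem \ref{thm:ZZ_crit_cited} concludes the proof.

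\textbf{Main obstacle.} The only delicate point is Step 2: verifying that the combination is exactly $P(\dd_z)$ applied to one theta function, with $P$ nonzero of degree $n'$. This needs the identity that dilating and chirping $H_k(t)e^{-\pi t^2}$ gives a polynomial in $(k+\omega)$ times the Gaussian $e^{\pi i\tau(k+\omega)^2}$. That holds because the dilated polynomial factor $H_k(t/\nu)$ survives unchanged, and the Fourier step maps polynomial-times-Gaussian to polynomial-times-Gaussian of the same degree. I would check this by computing the Fourier transform of $t^j e^{\pi i\tau' t^2}$, which is a degree-$j$ polynomial times $e^{-\pi i \xi^2/\tau'}$.
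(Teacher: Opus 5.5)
Your proposal is correct and follows the paper's proof essentially step for step. The steps are: the metaplectic reduction to $\tfrac pq\Z\times\Z$, noting that $\widehat R_{\sigma_0}$ preserves the top Hermite index; rewriting the Zak transform, via the Fourier transform and the Iwasawa refactorization, as $P(\dd_z)\thetac{\omega}{0}(x,\tau)$ with $\deg P$ equal to that top index; rescaling to $\widetilde P(X)=P(X/p)$ on $\calT_{p,p\omega,0}(\tau/p)$ and invoking Theorem \ref{thm:primitive-derivative}; and handling the pure Gaussian case by Lyubarskii--Seip--Wallst\'en. The differences are cosmetic: you expand in $H_k(t)e^{-\pi t^2}$ rather than the paper's basis $t^d h_0(t)$, and your ``main obstacle'' check via Fourier transforms of $t^j e^{\pi i\tau' t^2}$ is unnecessary, because the metaplectic factorization already turns the Fourier step into $\widehat V_\eta\widehat D_\nu\widehat R_\sigma$ applied to a combination of Hermite functions.
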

\begin{proof}
\textbf{Step 1.}
Let $g\in L^2(\R)$ be a non-trivial linear combination of Hermite functions, i.e.,
\begin{equation}
    g(t)=\sum\limits_{j=0}^n c_jh_j(t),\quad c_j\in\C.
\end{equation}
If $c_j=0$ for every $1\leq j\leq n$, then $g=c_0h_0$ with
$c_0\neq0$. Since $n\in\N$ and $q\geq np+2>p$, we have
$D(\Lambda)=\tfrac{q}{p}>1$. The characterization of Gaussian Gabor frames
\cite{Lyubarskii1992,Seip1992,SeipWallsten1992} therefore proves the
claim in this case.

We may thus assume that $c_j\neq0$ for some $1\leq j\leq n$.
Replacing $n$, if necessary, by the largest such index, we may assume
that $c_n\neq0$. The hypothesis $q\geq np+2$ remains valid after this
replacement. For all $\sigma\in\C$, $|\sigma|=1$, we may write
\begin{equation}\label{eq:lin_comb_sigma_action}
    \widehat{R}_\sigma g = \sum\limits_{j=0}^n c_j \widehat{R}_\sigma h_j = \sum\limits_{j=0}^n c_j (\overline{\sigma})^j h_j
\end{equation}
and $c_n (\overline{\sigma})^n\neq0$, so  $\widehat{R}_\sigma g$ is still a non-trivial linear combination of Hermite functions. 
Apply the lattice reduction from Step~1 of the proof of Theorem
\ref{thm:main-intro}.  If $S=V_{\eta_0}D_{\nu_0}R_{\sigma_0}$ is the
symplectic matrix used there, then metaplectic covariance reduces the claim to
the frame property of
$$
\calG(\widehat V_{\eta_0}\widehat D_{\nu_0}g^{(0)},
\tfrac pq\Z\times\Z),
\quad g^{(0)}:=\widehat R_{\sigma_0}g.
$$
The function $g^{(0)}$ is again a nontrivial linear combination with the same
largest Hermite index.  Since the argument below applies to every such linear
combination, we relabel $g^{(0)}$ as $g$ for the remainder of the proof.

\textbf{Step 2.} We begin analogously to Step 2 of the proof of Theorem \ref{thm:main-intro} and look at the Zak transform of $\widehat{R}_i\widehat{V}_{\eta_0}\widehat{D}_{\nu_0} g$. We select $\eta\in\R$ and $\nu>0$ such that 
\begin{equation}
    R_i V_{\eta_0} D_{\nu_0} = V_\eta D_\nu R_\sigma.
\end{equation}
By \eqref{eq:lin_comb_sigma_action}, there exists a non-trivial linear combination $\Tilde{g}$ of $h_0,\dots,h_n$ such that
\begin{equation}
    \widehat{V}_\eta \widehat{D}_\nu \widehat{R}_\sigma g = \widehat{V}_\eta \widehat{D}_\nu \Tilde{g}.
\end{equation}
Now, we compute the entries of the matrix 
\begin{equation}
         \QQ(x,\omega)_{rs} = \zak (\widehat{V}_{\eta_0} \widehat{D}_{\nu_0}g)( x+\tfrac{pr}{q}, \omega +\tfrac{s}{p}) ,\quad 0\leq r\leq q-1, \, 0\leq s\leq p-1.
     \end{equation}
     As previously seen, we have
     \begin{equation}
         \begin{split}
        \zak (\widehat{V}_{\eta_0} \widehat{D}_{\nu_0}g)( x, \omega)  &=  e^{2\pi i x\omega }\zak (\widehat{V}_{\eta} \widehat{D}_{\nu}\widehat{R}_\sigma g)(\omega , -x) \\
         & =  e^{2\pi i x\omega }\zak (\widehat{V}_{\eta} \widehat{D}_{\nu}\Tilde{g})(\omega, -x).
         \end{split}
     \end{equation}
\textbf{Step 3.} We now compute $\zak (\widehat{V}_{\eta} \widehat{D}_{\nu}\Tilde{g})$ with respect to the non-orthogonal basis $\Tilde{h}_m(t) = t^m h_0(t)$, $m\in\N_0$. Since $\Tilde{h}_0,\dots, \Tilde{h}_n$ is a basis of the span of $h_0,\dots, h_n$, there exist coefficients $c_d'\in\C$, $c_n'\neq 0$, such that 
\begin{equation}
   \Tilde{g}(t) = \sum\limits_{d=0}^n c_d' t^d h_0(t),\quad c_d'\in\C,\ c_n'\neq 0.
\end{equation}
By \eqref{eq:delta_theta_ab}, we have
\begin{equation}
\begin{split}
          e^{2\pi i x\omega }\zak (\widehat{V}_{\eta} \widehat{D}_{\nu}\Tilde{h}_d)(\omega, -x)
     &= \nu^{-(d+1/2)}e^{2\pi i x\omega }\sum\limits_{k\in\Z} (k+\omega)^d e^{\pi i (\eta+i/\nu^2) (k+\omega)^2} e^{2\pi i k x} \\
     &=\nu^{-(d+1/2)} \sum\limits_{k\in\Z} (k+\omega)^d e^{\pi i (\eta+i/\nu^2) (k+\omega)^2} e^{2\pi i (k+\omega) x}\\
     & = \nu^{-(d+1/2)}\dd_z^d \thetac{\omega}{0}(x,\tau),\quad \tau = \eta+i/\nu^2.
\end{split}
\end{equation}
Let $P(X) = \sum_{d=0}^n \nu^{-(d+1/2)} c_d'X^d$. Then 
\begin{equation}
    e^{2\pi i x\omega }\zak (\widehat{V}_{\eta} \widehat{D}_{\nu}\Tilde{g})(\omega, -x) = \sum\limits_{d=0}^n \nu^{-(d+1/2)} c_d'\dd_z^d\thetac{\omega}{0}(x,\tau) = P(\dd_z)\thetac{\omega}{0}(x,\tau)
\end{equation}
and 
\begin{equation}
    \QQ(x,\omega)_{rs} = P(\dd_z)\thetac{\tfrac{p\omega+ s}{p}}{0}(p(\tfrac{x}{p}+\tfrac{r}{q}),\tau).
\end{equation}
\textbf{Step 4.}
We apply 
Theorem~\ref{thm:primitive-derivative} for the space $\calT_{p,p\omega,0}(\tau/p)$. 
Define
$$
\widetilde P(X):=P(X/p).
$$
We have $\deg \widetilde P=n$. By Proposition \ref{prop:elem_properties_calTp}~(ii)
and the chain rule, 
the matrix $\QQ(x,\omega)$ is the matrix representation of the linear map
\begin{equation}
    \calT_{p,p\omega ,0}(\tfrac{\tau}{p})\to \C^{q},\quad f \mapsto
\left(\widetilde P(\dd_z)f\left(\tfrac{x}{p}+\tfrac{r}{q}\right)\right)_{r=0}^{q-1}.
\end{equation}
Since $(p,q)=1$ and
$q\geq np+2$, the matrix $\QQ(x,\omega)$ has full rank for every $x$ and every
$\omega$.  

With the same quasi-periodicity and Wiener-amalgam arguments, this implies that $\calG(g,A\Z^2)$ is a frame.
\end{proof}
Corollary \ref{cor:higher_order} and the sufficient bound $q\geq np+2$ are optimal in the following sense: 
the authors in \cite{UlanovskiiZlotnikov2026} proved that there exist non-trivial linear combinations $g_n$ of Hermite functions up to order $n$ such that the system $\calG(g_n,\tfrac{1}{n+1}\Z\times\Z)$ is not a Gabor frame (in this case, $p=1$ and $q=n+1$). 
On the other hand, if $g=h_n$ is a Hermite function of order $n$ and $\Lambda$ is a rectangular lattice $\Lambda = a\Z\times b\Z$, then  $D(\Lambda)=\tfrac{q}{p}$ with $q\geq np+2$ necessarily satisfies $D(\Lambda)>n$. 
Together with Faulhuber and Zlotnikov, the second author showed
in \cite{FaulhuberShafkulovskaZlotnikov_part2}
that for sufficiently large $n$, a density $D(\Lambda)\in o(n)$ is already sufficient for  $\calG(h_n,a\Z\times b\Z)$ to be a frame, so Corollary \ref{cor:higher_order} is not optimal in this case.

\section{Appendix: Lean formalization}\label{sec:lean}

We now describe the formalization of Theorem~\ref{thm:main-intro} in Lean 4 \cite{Moura2021Lean4}, which builds on Mathlib \cite{mathlib2020}, the mathematical library of Lean.  We remark that there are now several recent papers in analysis with complete Lean verifications, including  \cite{armstrong2026formalization, pont2026cantor,hariharan2026milestone,ilin2026semi, liehr2026grothendieck,miller2026formalization}. 

Theorem~\ref{thm:main-intro} is an equivalence whose substance is the implication $(2)\implies(1)$. The Lean formalization verifies this implication, namely the following statement.

\begin{theorem*}
Let $\Lambda=M\Z^2$, $M\in\mathrm{GL}(2,\R)$, be a lattice whose density satisfies $D(\Lambda)=\frac qp$, where $p,q\in\N$ are coprime and $q\geq p+2$. Then $\calG(h_1,\Lambda)$ is a frame for $L^2(\R)$.
\end{theorem*}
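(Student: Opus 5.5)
This statement is the implication $(2)\Rightarrow(1)$ of Theorem~\ref{thm:main-intro}, which is already proved in the section before; the plan is to follow that argument, now organized as a sequence of lemmas that can be checked formally. \textbf{First}, the lattice reduction. We write $\Lambda=M\Z^2$ with $\det M=p/q>0$, set $S=\mathrm{diag}(p/q,1)M^{-1}\in\mathrm{SL}(2,\R)$, and take its Iwasawa factorization $S=V_{\eta_0}D_{\nu_0}R_{\sigma_0}$. By metaplectic covariance \eqref{eq:Gabor_Mp_invariance} and $\widehat R_{\sigma_0}h_1=\overline{\sigma_0}h_1$, it suffices to show that $\calG(g_0,\tfrac pq\Z\times\Z)$ is a frame, where $g_0=\widehat V_{\eta_0}\widehat D_{\nu_0}h_1$.

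\textbf{Second}, we invoke the Zibulski--Zeevi criterion, Theorem~\ref{thm:ZZ_crit_cited}. Using \eqref{eq:Zak_ft} and \eqref{eq:Zak_general_h1}, we identify the entries $\QQ(x,\omega)_{rs}$, up to one common nonzero scalar, with $\dd_z\phi_s(\tfrac xp+\tfrac rq)$. Here $\phi_s(z)=\thetac{(p\omega+s)/p}{0}(pz,\tau)$ with $\tau=\eta+i/\nu^2\in\HH$, and these $\phi_s$ form the canonical basis of $\calT_{p,p\omega,0}(\tau/p)$ by Proposition~\ref{prop:elem_properties_calTp}(ii). Theorem~\ref{thm:primitive-derivative} with $n=1$, $P(X)=X$ and $q\ge p+2$ then shows that $\QQ(x,\omega)$ has rank $p$ for every $(x,\omega)$.

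\textbf{Third}, we pass from pointwise full rank to uniform singular-value bounds. The quasi-periodicity \eqref{eq:Z_quasip} shows that $\QQ(x+n,\omega+m)$ differs from $\QQ(x,\omega)$ only by unitary diagonal and permutation factors, so its singular values depend only on $(x,\omega)$ modulo $\Z^2$. Since $g_0$ is Schwartz and hence lies in $W(C,\ell^1)$, the map $(x,\omega)\mapsto\QQ(x,\omega)$ is continuous. The smallest singular value is therefore a continuous positive function on the compact set $[0,1]^2$, so it is bounded below, and the largest singular value is bounded above. Theorem~\ref{thm:ZZ_crit_cited} then gives the frame property.

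\textbf{Main obstacle.} For a Lean formalization, the hard step is the theta-function input behind Theorem~\ref{thm:primitive-derivative}. This means the determinant identity in Lemma~\ref{lem:frobenius} (Rosengren--Schlosser) and the jet-separation statement in Lemma~\ref{lem:torsion-jet} (Bauer--Szemberg), neither of which is in Mathlib. In the case $n=1$ I would first try to avoid the general jet theorem. The polynomial $Q$ then has degree at most $p\le q-2$, and $\Theta$ has a single simple zero per period cell. So I would attempt a direct argument: a nonzero combination of $\Theta,\dots,\dd_z^{p}\Theta$ cannot vanish at all $q$ points $w'+j/q$. The idea is to expand in the basis $\Theta(\cdot+j/q)$ of $\calT_{q}$ and use a Vandermonde-type nondegeneracy. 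Failing that, the fallback is to formalize the Bauer--Szemberg statement as an axiom-free lemma specialized to principal polarizations on elliptic curves.
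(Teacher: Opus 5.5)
Your argument is correct and is essentially the paper's own proof. It uses the same Iwasawa/metaplectic reduction to $\tfrac pq\Z\times\Z$, the same identification of $\QQ(x,\omega)$ with the derivative-evaluation map on $\calT_{p,p\omega,0}(\tau/p)$ to which Theorem~\ref{thm:primitive-derivative} is applied with $n=1$ and $P(X)=X$, and the same quasi-periodicity-plus-compactness passage to uniform singular-value bounds. Your closing formalization remark is outside the proof proper, but your instinct to avoid Bauer--Szemberg is sound: Lemma~\ref{lem:torsion-jet} only needs the order-$m$ jet map at the single point $w$ on the $q$-dimensional space $\calT_{q,a,qb}(q\tau)$ to be surjective for $m\le q-2$, and this follows from Proposition~\ref{prop:elem_properties_calTp}(iii) by an elementary Riemann--Roch-type argument (for each $k\le q-2$ prescribe a zero of exact order $k$ at $w$ and place the remaining $q-k\ge 2$ zeros away from $w$, adjusting one to meet the sum condition), so neither the general jet theorem nor a Vandermonde trick is required.
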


The converse implication $(1)\implies(2)$ is not part of the formalization, since, as explained after Theorem~\ref{thm:main-intro}, it follows from results that are well established in the literature. Consequently, the formalization covers precisely the direction of Theorem~\ref{thm:main-intro} that is new. The purpose of this appendix is to make the correspondence between the Lean code and the mathematical statement precise and transparent. To describe this correspondence, a small amount of Lean syntax is needed: in Lean, a definition is introduced with \lean{def}, or with \lean{abbrev} in the case of a mere abbreviation, while a proved statement is introduced with \lean{lemma} or \lean{theorem}. In a statement, the hypotheses are listed after the name of the statement, and each hypothesis is given a name by which it can be referred to. The expression after the last colon is the conclusion.

\subsection{The space $L^2(\R)$ and the first Hermite function}

We begin by defining the Hilbert space $L^2(\R)$. In Mathlib, \lean{volume} denotes the Lebesgue measure on $\R$, and \lean{Lp ℂ 2 volume} is Lean's notation for the Hilbert space $L^2(\R)=L^2(\R,\C)$. We abbreviate this as follows.

\begin{leancode}
abbrev L2 := Lp ℂ 2 (volume : Measure ℝ)
\end{leancode}

The first Hermite function is defined directly by the formula $h_1(t)=te^{-\pi t^2}$.

\begin{leancode}
def h1 : ℝ → ℂ :=
    fun t => (t : ℂ) * Complex.exp (-(Real.pi : ℂ) * (t : ℂ) ^ 2)
\end{leancode}

Here, \lean{ℝ → ℂ} indicates that \lean{h1} is a function from $\R$ to $\C$, \lean{Complex.exp} is the complex exponential function, and \lean{Real.pi} is the number $\pi$. The annotations \lean{(t : ℂ)} and \lean{(Real.pi : ℂ)} tell Lean to regard the real numbers $t$ and $\pi$ as complex numbers. As pointed out in the introduction, the normalization of $h_1$ does not affect the frame property, and \lean{h1} is precisely the function $h_1$ used throughout the paper.

\subsection{Lattices and Gabor atoms}

A lattice $\Lambda=M\Z^2$ is determined by its generating matrix $M \in \R^{2\times 2}$. In Lean, the expression \lean{M : Matrix (Fin 2) (Fin 2) ℝ} corresponds to a real $2\times2$ matrix. The type \lean{Fin 2} consists of the indices $0$ and $1$, so that \lean{M i j} denotes the entry of $M$ in row $i$ and column $j$. For $(m,n)\in\Z^2$, the time-frequency shift $M_\omega T_x h_1$ of $h_1$ along the lattice point $(x,\omega)^{T}=M(m,n)^{T}$ is first defined as an ordinary complex-valued function on $\R$.

\begin{leancode}
def h1Atom (M : Matrix (Fin 2) (Fin 2) ℝ) (m n : ℤ) : ℝ → ℂ := fun t =>
    let x := M 0 0 * (m : ℝ) + M 0 1 * (n : ℝ)
    let ω := M 1 0 * (m : ℝ) + M 1 1 * (n : ℝ)
    Complex.exp (2 * (Real.pi : ℂ) * Complex.I * (ω : ℂ) * (t : ℂ)) * h1 (t - x)
\end{leancode}

The keyword \lean{let} introduces the local abbreviations \lean{x} and \lean{ω} for the two coordinates of the lattice point $M(m,n)^{T}$, where the integers \lean{m n : ℤ} are regarded as real numbers via \lean{(m : ℝ)} and \lean{(n : ℝ)}. The last line is the function $t\mapsto e^{2\pi i\omega t}h_1(t-x)$. As $(m,n)$ ranges over $\Z^2$, the point $M(m,n)^{T}$ ranges over $\Lambda$, so the functions \lean{h1Atom M m n} are precisely the elements of the Gabor system $\calG(h_1,\Lambda)$.

To use this function in the Hilbert space $L^2(\R)$, one must first verify that it is square-integrable. This is the content of the following lemma.

\begin{leancode}
lemma h1Atom_memL2 (M : Matrix (Fin 2) (Fin 2) ℝ) (m n : ℤ) :
    MemLp (h1Atom M m n) 2 (volume : Measure ℝ)
\end{leancode}

The predicate \lean{MemLp} expresses membership of an ordinary function in an $L^p$-space, so the lemma \lean{h1Atom_memL2} states that every time-frequency shift of the first Hermite function is measurable and square-integrable with respect to the Lebesgue measure. Mathematically, this is immediate from the Gaussian decay of $h_1$, but Lean requires an explicit proof. The lemma is then used to construct the corresponding element of $L^2(\R)$.

\begin{leancode}
def h1AtomL2 (M : Matrix (Fin 2) (Fin 2) ℝ) (m n : ℤ) : L2 :=
    (h1Atom_memL2 M m n).toLp
\end{leancode}

The operation \lean{toLp} converts the function \lean{h1Atom M m n}, together with the proof of its square-integrability, into an element of \lean{L2}.

\subsection{Gabor frames}

The frame property of the Gabor system $\calG(h_1,M\Z^2)$ is formalized by the two-sided frame inequality \eqref{eq:def:frame}.

\begin{leancode}
def IsGaborFrame (M : Matrix (Fin 2) (Fin 2) ℝ) : Prop :=
  ∃ A B : ℝ, 0 < A ∧ A ≤ B ∧ ∀ f : L2,
    A * ‖f‖ ^ 2 ≤ ∑' (m : ℤ) (n : ℤ), ‖⟪f, h1AtomL2 M m n⟫_ℂ‖ ^ 2 ∧
    ∑' (m : ℤ) (n : ℤ), ‖⟪f, h1AtomL2 M m n⟫_ℂ‖ ^ 2 ≤ B * ‖f‖ ^ 2
\end{leancode}

This definition states the existence of frame bounds $A,B\in\R$ with $0<A\leq B$ such that the frame inequalities hold for every \lean{f : L2}, i.e., for every $f\in L^2(\R)$. Here \lean{∑'} denotes Mathlib's infinite sum, \lean{⟪f, h⟫_ℂ} denotes the complex inner product of $f$ and $h$, and \lean{‖f‖} denotes the norm of \lean{f}. Since $(m,n)\mapsto M(m,n)^{T}$ enumerates the lattice $M\Z^2$, the double sum over \lean{m n : ℤ} is the sum over $(x,\omega)\in\Lambda$ in \eqref{eq:def:frame}. Consequently, \lean{IsGaborFrame M} expresses precisely that
\begin{equation*}
    A\norm{f}^2
    \leq
    \sum_{(x,\omega)\in M\Z^2}|\langle f,M_\omega T_x h_1\rangle|^2
    \leq
    B\norm{f}^2,\quad f\in L^2(\R).
\end{equation*}

\subsection{Formalization of the main result}

With the above definitions in place, the implication $(2)\Rightarrow(1)$ of Theorem~\ref{thm:main-intro} is stated in Lean as follows.

\begin{leancode}
theorem MainResult
    (M : Matrix (Fin 2) (Fin 2) ℝ) {p q : ℕ}
    (hyp1 : M.det ≠ 0)
    (hyp2 : Nat.Coprime p q)
    (hyp3 : q ≥ p + 2)
    (hyp4 : |M.det|⁻¹ = (q : ℝ) / (p : ℝ)) :
    IsGaborFrame M
\end{leancode}

The above theorem takes as input a real $2\times2$ matrix \lean{M}, which generates the lattice $\Lambda=M\Z^2$, and two natural numbers \lean{p} and \lean{q}. The notation \lean{{p q : ℕ}} means that Lean treats $p,q$ as implicit arguments, whose values are inferred from the hypotheses whenever the theorem is applied. The hypothesis \lean{hyp1} states that $\det M\neq0$, so that $M\in\mathrm{GL}(2,\R)$ and $M\Z^2$ is indeed a lattice. The hypotheses \lean{hyp2} and \lean{hyp3} state that $p$ and $q$ are coprime and that $q\geq p+2$, respectively. Since the density of $M\Z^2$ is the reciprocal of the area $|\det M|$ of a fundamental domain, the hypothesis \lean{hyp4}, in which \lean{(q : ℝ)} and \lean{(p : ℝ)} denote $q$ and $p$ regarded as real numbers, states that $D(\Lambda)=\frac qp$. Finally, the expression after the last colon is the conclusion, namely that $\calG(h_1,\Lambda)$ is a frame for $L^2(\R)$.

We note that Lean's natural numbers include $0$, whereas Theorem~\ref{thm:main-intro} concerns $p,q\in\N=\{1,2,\ldots\}$. However, the case $p=0$ is excluded by the hypotheses: Lean follows the convention that division by zero equals zero, so for $p=0$ the hypothesis \lean{hyp4} would read $|\det M|^{-1}=0$, which contradicts \lean{hyp1}. Together with $q\geq p+2$, the hypotheses of \lean{MainResult} thus describe exactly the lattices in Theorem~\ref{thm:main-intro}(2). Accordingly, \lean{MainResult} formalizes the implication $(2)\Rightarrow(1)$ of Theorem~\ref{thm:main-intro}, that is, the theorem stated at the beginning of this appendix.

\subsection{Source code}\label{sec:lean-source}

The complete source code of the Lean formalization is available at
\begin{center}
\url{https://github.com/lukasliehr/h1_frames}.
\end{center}
The file \lean{Showcase.lean} contains the definitions and statements described above, with the two proofs, of \lean{h1Atom_memL2} and of \lean{MainResult}, replaced by the placeholder \lean{sorry}. This file depends on nothing but Mathlib, and verifying that the formalization faithfully captures the theorem stated at the beginning of this appendix therefore only requires reading this file. The companion file \lean{Showcase_WithProofs.lean} contains the same definitions and statements, but each placeholder is replaced by a fully verified proof, whose correctness is certified by the Lean kernel during compilation.

\section{Acknowledgements}
I.~S.~was funded in part or in whole by the Austrian Science Fund (FWF)  [\href{https://doi.org/10.55776/Y1199}{10.55776/Y1199}].
L.~L.~is grateful to the Azrieli Foundation for the award of an Azrieli Fellowship and acknowledges the support of this research by ISF Grant No.~854/25.
For open access purposes, the authors have applied a CC BY public copyright license to any author accepted manuscript version arising from this submission.

\bibliographystyle{abbrv}
\bibliography{bibfile}

\begin{thebibliography}{10}

\bibitem{armstrong2026formalization}
S.~Armstrong and J.~Kempe.
\newblock Formalization of {D}e {G}iorgi--{N}ash--{M}oser {T}heory in {L}ean.
\newblock {\em arXiv preprint arXiv:2604.05984}, 2026.

\bibitem{Balian1981}
R.~Balian.
\newblock Un principe d'incertitude fort en th\'eorie du signal ou en
  m\'ecanique quantique.
\newblock {\em C. R. Acad. Sci. Paris S\'er. II M\'ec. Phys. Chim. Sci. Univers
  Sci. Terre}, 292(20):1357--1362, 1981.

\bibitem{BauerSzemberg1997}
T.~Bauer and T.~Szemberg.
\newblock Higher order embeddings of abelian varieties.
\newblock {\em Math. Z.}, 224(3):449--455, 1997.

\bibitem{pont2026cantor}
J.~de~Dios~Pont, L.~Liehr, and M.~A. Taylor.
\newblock Cantor measures with odd base do not admit {F}ourier frames.
\newblock {\em arXiv preprint arXiv:2607.08656}, 2026.

\bibitem{Moura2021Lean4}
L.~de~Moura and S.~Ullrich.
\newblock The {Lean} 4 theorem prover and programming language.
\newblock In {\em Automated Deduction -- CADE 28}, volume 12699 of {\em Lecture
  Notes in Computer Science}, pages 625--635, Cham, 2021. Springer.

\bibitem{FarkasKra2001}
H.~M. Farkas and I.~Kra.
\newblock {\em Theta constants, {R}iemann surfaces and the modular group},
  volume~37 of {\em Graduate Studies in Mathematics}.
\newblock American Mathematical Society, Providence, RI, 2001.
\newblock An introduction with applications to uniformization theorems,
  partition identities and combinatorial number theory.

\bibitem{Faulhuber2020}
M.~Faulhuber.
\newblock On the parity under metapletic operators and an extension of a result
  of {L}yubarskii and {N}es.
\newblock {\em Results Math.}, 75(1):Paper No. 8, 16, 2020.

\bibitem{Faulhuber2026wirtinger}
M.~Faulhuber.
\newblock On a fundamental barrier of the {W}irtinger criterion for {G}abor
  systems with odd functions.
\newblock {\em Results Appl. Math.}, 30:Paper No. 100719, 2026.

\bibitem{FaulhuberEtAl2025}
M.~Faulhuber, I.~Shafkulovska, and I.~Zlotnikov.
\newblock On the frame property of {H}ermite functions and exploration of their
  frame sets.
\newblock {\em J. Fourier Anal. Appl.}, 31(2):Paper No. 21, 30, 2025.

\bibitem{FaulhuberShafkulovskaZlotnikov_part2}
M.~Faulhuber, I.~Shafkulovska, and I.~Zlotnikov.
\newblock Asymptotic safety regions for {G}abor frames generated by {H}ermite
  functions.
\newblock {\em In preparation}, 2026.

\bibitem{Folland1989}
G.~B. Folland.
\newblock {\em Harmonic analysis in phase space}, volume 122 of {\em Annals of
  Mathematics Studies}.
\newblock Princeton University Press, Princeton, NJ, 1989.

\bibitem{GhoshSelvan2025}
R.~Ghosh and A.~Antony~Selvan.
\newblock On {G}abor frames generated by {B}-splines, totally positive
  functions, and {H}ermite functions.
\newblock {\em Appl. Numer. Math.}, 207:1--23, 2025.

\bibitem{Groechenig2001}
K.~Gr\"{o}chenig.
\newblock {\em Foundations of time-frequency analysis}.
\newblock Applied and Numerical Harmonic Analysis. Birkh\"{a}user Boston, Inc.,
  Boston, MA, 2001.

\bibitem{Groechenig2014}
K.~Gr\"ochenig.
\newblock The mystery of {G}abor frames.
\newblock {\em J. Fourier Anal. Appl.}, 20(4):865--895, 2014.

\bibitem{GroechenigKoppensteiner2019}
K.~Gr\"ochenig and S.~Koppensteiner.
\newblock Gabor frames: characterizations and coarse structure.
\newblock In {\em New trends in applied harmonic analysis. {V}ol. 2---harmonic
  analysis, geometric measure theory, and applications}, Appl. Numer. Harmon.
  Anal., pages 93--120. Birkh\"auser/Springer, Cham, 2019.

\bibitem{GroechenigLyubarskii2007}
K.~Gr\"ochenig and Y.~Lyubarskii.
\newblock Gabor frames with {H}ermite functions.
\newblock {\em C. R. Math. Acad. Sci. Paris}, 344(3):157--162, 2007.

\bibitem{GroechenigLyubarskii2009}
K.~Gr\"ochenig and Y.~Lyubarskii.
\newblock Gabor (super)frames with {H}ermite functions.
\newblock {\em Math. Ann.}, 345(2):267--286, 2009.

\bibitem{hariharan2026milestone}
S.~Hariharan, C.~Birkbeck, S.~Lee, H.~K.~G. Ma, B.~Mehta, A.~Poiroux, and
  M.~Viazovska.
\newblock A {M}ilestone in {F}ormalization: {T}he {S}phere {P}acking {P}roblem
  in {D}imension 8.
\newblock {\em arXiv preprint arXiv:2604.23468}, 2026.

\bibitem{Heil2007}
C.~Heil.
\newblock History and evolution of the density theorem for {G}abor frames.
\newblock {\em J. Fourier Anal. Appl.}, 13(2):113--166, 2007.

\bibitem{HorstEtAl2025}
A.~Horst, J.~Lemvig, and A.~E. Videb\ae~k.
\newblock On the non-frame property of {G}abor systems with {H}ermite
  generators and the frame set conjecture.
\newblock {\em Appl. Comput. Harmon. Anal.}, 76:Paper No. 101747, 17, 2025.

\bibitem{ilin2026semi}
V.~Ilin.
\newblock Semi-autonomous formalization of the {V}lasov-{M}axwell-{L}andau
  equilibrium.
\newblock {\em arXiv preprint arXiv:2603.15929}, 2026.

\bibitem{Knapp1996}
A.~W. Knapp.
\newblock {\em Lie groups beyond an introduction}, volume 140 of {\em Progress
  in Mathematics}.
\newblock Birkh\"auser Boston, Inc., Boston, MA, 1996.

\bibitem{Krazer1903}
A.~Krazer.
\newblock {\em Lehrbuch der Thetafunktionen}, volume~12.
\newblock BG Teubner, 1903.

\bibitem{AKIZ}
A.~Kulikov and I.~Zlotnikov.
\newblock Gabor frames for the first {H}ermite function with every admissible
  density.
\newblock {\em In preparation}, 2026.

\bibitem{Lemvig2017}
J.~Lemvig.
\newblock On some {H}ermite series identities and their applications to {G}abor
  analysis.
\newblock {\em Monatsh. Math.}, 182(4):899--912, 2017.

\bibitem{liehr2026grothendieck}
L.~Liehr, M.~A. Taylor, and P.~Yu.
\newblock Grothendieck's theorem for {B}essel sequences.
\newblock {\em arXiv preprint arXiv:2608.12280}, 2026.

\bibitem{Low1985}
F.~Low.
\newblock Complete sets of wave packets.
\newblock In C.~DeTar, J.~Finkelstein, and C.-I. Tan, editors, {\em A Passion
  for Physics: Essays in Honor of Geoffrey Chew}, pages 17--22. World
  Scientific, Singapore, 1985.

\bibitem{lyubarskii2013gabor}
Y.~Lyubarskii and P.~G. Nes.
\newblock Gabor frames with rational density.
\newblock {\em Applied and Computational Harmonic Analysis}, 34(3):488--494,
  2013.

\bibitem{Lyubarskii1992}
Y.~I. Lyubarski\u{\i}.
\newblock Frames in the {B}argmann space of entire functions.
\newblock In {\em Entire and subharmonic functions}, volume~11 of {\em Adv.
  Soviet Math.}, pages 167--180. Amer. Math. Soc., Providence, RI, 1992.

\bibitem{miller2026formalization}
J.~K. Miller.
\newblock A {F}ormalization of the {M}ean-{F}ield {D}erivation of the {V}lasov
  {E}quation: {AI-Assisted Lean Formalization as a Strategy Game}.
\newblock {\em arXiv preprint arXiv:2607.08986}, 2026.

\bibitem{MumfordTataI}
D.~Mumford.
\newblock {\em Tata lectures on theta. {I}}.
\newblock Modern Birkh\"auser Classics. Birkh\"auser Boston, Inc., Boston, MA,
  2007.
\newblock With the collaboration of C. Musili, M. Nori, E. Previato and M.
  Stillman, Reprint of the 1983 edition.

\bibitem{RosengrenSchlosser2006}
H.~Rosengren and M.~Schlosser.
\newblock Elliptic determinant evaluations and the {M}acdonald identities for
  affine root systems.
\newblock {\em Compos. Math.}, 142(4):937--961, 2006.

\bibitem{Seip1992}
K.~Seip.
\newblock Density theorems for sampling and interpolation in the
  {B}argmann-{F}ock space. {I}.
\newblock {\em J. Reine Angew. Math.}, 429:91--106, 1992.

\bibitem{SeipWallsten1992}
K.~Seip and R.~Wallst\'en.
\newblock Density theorems for sampling and interpolation in the
  {B}argmann-{F}ock space. {II}.
\newblock {\em J. Reine Angew. Math.}, 429:107--113, 1992.

\bibitem{mathlib2020}
{The mathlib Community}.
\newblock The {Lean} mathematical library.
\newblock In {\em Proceedings of the 9th ACM SIGPLAN International Conference
  on Certified Programs and Proofs (CPP 2020)}, pages 367--381, New York, NY,
  2020. ACM.

\bibitem{UlanovskiiZlotnikov2026}
A.~Ulanovskii and I.~Zlotnikov.
\newblock Periodic non-uniqueness sets for shift-invariant spaces and
  parity-based obstructions to the frame property for {G}abor systems, 2026.
\newblock Preprint, arXiv:2606.31450.

\bibitem{ZibulskiZeevi1993}
M.~Zibulski and Y.~Zeevi.
\newblock Oversampling in the {G}abor scheme.
\newblock {\em Trans. Sig. Proc.}, 41(8):2679–2687, Aug. 1993.

\bibitem{ZibulskiZeevi1997}
M.~Zibulski and Y.~Y. Zeevi.
\newblock Analysis of multiwindow {G}abor-type schemes by frame methods.
\newblock {\em Appl. Comput. Harmon. Anal.}, 4(2):188--221, 1997.

\end{thebibliography}

\end{document}